\begin{document}
\define\cx{\langle x \rangle}
\define\lx{\lambda(x)}
\define\lxi{\lambda(x)^{-1}}
\define\lt{\lambda(t)}
\define\lti{\lambda(t)^{-1}}
\define\clx{\langle\lambda(x)\rangle}
\define\lG{\lambda(G)}
\define\rG{\rho(G)}
\define\lN{\lambda(N)}
\define\X[#1]{\{1,x,\dots,x^{#1-1}\}}
\define\Y[#1]{\{t,tx,\dots,tx^{#1-1}\}}
\define\lg{\lambda(g)}
\define\lgi{\lambda(g)^{-1}}
\parskip=0.125in
\title{Enumerating Dihedral Hopf-Galois Structures Acting on Dihedral Extensions}
\date{\today}
\author{Timothy Kohl\\
Department of Mathematics and Statistics\\
Boston University\\
Boston, MA 02215\\
tkohl@math.bu.edu}
\maketitle
\begin{abstract}
The work of Greither and Pareigis details the enumeration of the Hopf-Galois structures (if any) on a given separable field extension. For an extension $L/K$ which is classically Galois with $G=Gal(L/K)$ the Hopf algebras in question are of the form $(L[N])^{G}$ where $N\leq B=Perm(G)$ is a regular subgroup that is normalized by the left regular representation $\lG\leq B$. We consider the case where both $G$ and $N$ are isomorphic to a dihedral group $D_n$ for any $n\geq 3$. Using the normal block systems inherent to the left regular representation of each $D_n$, (and every other regular permutation group isomorphic to $D_n$) we explicitly enumerate all possible such $N$ which arise.
\end{abstract}
\section{Preliminaries}
The general theory of Hopf-Galois extensions can be found in references such as \cite{CS69} and \cite{Sweedler1968} which applies for general extensions of commutative rings. Our focus will be on the case of separable extensions of fields as elucidated in \cite{GreitherPareigis1987}. We give some of the background below.\par
\noindent Given a separable extension of fields $L/K$, and a $K$-Hopf algebra $H$, we say that $L/K$ is $H$-Galois if there exists a $K$-algebra homomorphism $$\mu:H\rightarrow End_{K}(L)$$
such that 
$$\mu(h)(ab)=\sum_{(h)}\mu(h_{(1)})(a)\mu(h_{(2)})(b)$$
where $\Delta(h)=\sum_{(h)}h_{(1)}\otimes h_{(2)}$ and where the fixed ring 
$$L^H=\{x\in L| \mu(h)(x)=\epsilon(h)x\ \forall h\in H\}$$
is precisely $K$ and the induced map $1\otimes\mu:L\otimes H\rightarrow End_K(L)$ is an isomorphism. It is easy to check, for example, that a 'classical' Galois extension $L/K$ with $G=Gal(L/K)$ is Hopf-Galois for the $K$-Hopf algebra $K[G]$. For a general separable extension $L/K$, the work of Greither and Pareigis \cite{GreitherPareigis1987} demonstrates the method one uses to enumerate the Hopf-Galois structures (if any) that may arise. Let $\tilde L$ be the Galois closure of $L/K$ and consider \par
$$\diagram
        & \tilde L \ddline^{G} \drline^{G'} \\
        & & L \dlline \\
        & K
\enddiagram$$
where $G$ and $G'$ are the relevant Galois groups. If we let $S=G/G'$ then $S$ has a natural $G$ action, by left translation on cosets, which yields an embedding $\lambda:G\rightarrow B=Perm(S)$. A {\it regular} subgroup $N$ of $B$ is one that acts transitively and fixed-point freely. The following shows to enumerate and describe what Hopf algebras (if any) act on $L/K$ to make it Hopf-Galois. 
\begin{theorem}\cite[2.1]{GreitherPareigis1987}
Let $L/K$ be a separable field extension, $S$ and $B$ as above, then the following are
 equivalent:\par
(a) There is a $K$-Hopf algebra $H$ such that $L/K$ is $H$-Galois\par
(b) There is a regular subgroup $N\leq B$ that is normalized by $\lambda(G)\leq B$.\par
\noindent The Hopf algebra in (a) is a $\tilde L$-form of $K[N]$ and can be computed by means of Galois descent.
\end{theorem}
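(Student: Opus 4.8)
The plan is to base change to the Galois closure $\tilde L$, where $L$ becomes ``split,'' classify the Hopf--Galois structures over $\tilde L$, and then descend. Since $L/K$ is separable of degree $[G:G']$, the set $\mathrm{Hom}_K(L,\tilde L)$ of $K$-embeddings has $[G:G']$ elements and is a transitive left $G$-set, identified $G$-equivariantly with $S=G/G'$ via $gG'\mapsto g|_L$. The assignment $a\otimes b\mapsto\big(gG'\mapsto a\cdot g(b)\big)$ then gives an isomorphism of $\tilde L$-algebras $\tilde L\otimes_K L\cong \mathrm{Map}(S,\tilde L)$, under which the $G$-action on $\tilde L\otimes_K L$ through the left tensor factor corresponds to $(g\cdot f)(s)=g\big(f(\lambda(g)^{-1}s)\big)$ on $\mathrm{Map}(S,\tilde L)$; in particular $L\cong\mathrm{Map}(S,\tilde L)^{G}$. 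As $\tilde L/K$ is faithfully flat (indeed Galois), Galois descent says that a $K$-algebra, a $K$-Hopf algebra, or a Hopf--Galois structure on $L/K$ is the same datum as the corresponding object over $\tilde L$ equipped with a compatible semilinear $G$-action. So (a) is equivalent to: there is a Hopf--Galois structure on $\mathrm{Map}(S,\tilde L)/\tilde L$ carrying such a $G$-action.

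Next I would handle the split case over $E:=\tilde L$. I claim the Hopf--Galois structures on $\mathrm{Map}(S,E)\cong E^{\,|S|}$ over $E$ are precisely the group algebras $E[N]$ with $N\le \mathrm{Perm}(S)$ a regular subgroup, acting by the translation action $\eta\cdot f=\big(s\mapsto f(\eta^{-1}s)\big)$, for which $\mathrm{Map}(S,E)$ is free of rank one over $E[N]$. That such an $N$ yields a Hopf--Galois structure is the classical fact that $E[N]$ makes $\mathrm{Map}(N,E)$ into an $E[N]$-Galois extension, transported along any $N$-equivariant identification $\mathrm{Map}(S,E)\cong\mathrm{Map}(N,E)$ (available by regularity). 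For the converse one exploits the rigidity of the split algebra: if an $E$-Hopf algebra $H_0$ makes $\mathrm{Map}(S,E)/E$ Hopf--Galois, then $\dim_E H_0=|S|$, the grouplike elements of $H_0$ act as $E$-algebra automorphisms and so permute the $|S|$ primitive idempotents of $\mathrm{Map}(S,E)$, and the Galois isomorphism $\mathrm{Map}(S,E)\otimes_E H_0\cong \mathrm{End}_E\big(\mathrm{Map}(S,E)\big)$ forces these grouplikes to be a full $E$-basis of $H_0$ permuting $S$ regularly; hence $H_0=E[N]$ with $N\le \mathrm{Perm}(S)$ regular and the action the translation one.

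Combining the two steps, (a) is equivalent to a choice of regular $N\le \mathrm{Perm}(S)$ together with a semilinear $G$-action on $\tilde L[N]$ by Hopf-algebra automorphisms, compatible with the diagonal $G$-action on $\mathrm{Map}(S,\tilde L)$. Chasing grouplikes, conjugating the translation operator of $\eta\in N$ by the semilinear operator of $g\in G$ is a short computation that returns the translation operator of $\lambda(g)\,\eta\,\lambda(g)^{-1}\in \mathrm{Perm}(S)$; hence the $G$-action on the grouplikes of $\tilde L[N]$ must be conjugation by $\lambda(G)$ inside $\mathrm{Perm}(S)$, and such an action is well defined on $\tilde L[N]$ exactly when $\lambda(G)$ normalizes $N$. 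This proves (a)$\Rightarrow$(b). Conversely, if $N\le \mathrm{Perm}(S)$ is regular and normalized by $\lambda(G)$, then ``Galois on $\tilde L$, conjugation by $\lambda(G)$ on $N$'' defines a semilinear $G$-action on $\tilde L[N]$ by Hopf-algebra automorphisms, so by descent $H:=\big(\tilde L[N]\big)^{G}$ is a $K$-Hopf algebra with $\tilde L\otimes_K H\cong\tilde L[N]$ --- i.e.\ an $\tilde L$-form of $K[N]$ --- the translation action of $\tilde L[N]$ on $\mathrm{Map}(S,\tilde L)$ is $G$-equivariant and hence descends to an action of $H$ on $L\cong\mathrm{Map}(S,\tilde L)^{G}$, and the Hopf--Galois property descends along the faithfully flat extension $\tilde L/K$. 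This gives (b)$\Rightarrow$(a) together with the description of $H$.

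The only genuinely substantive point is the classification in the split case: that \emph{every} Hopf algebra making $\mathrm{Map}(S,\tilde L)/\tilde L$ Hopf--Galois is a group algebra on a regular permutation group. It rests on the rigidity of split \'etale algebras --- tight control of their idempotents and automorphisms --- coupled with the dimension count and nondegeneracy packaged into the Hopf--Galois axioms. Once that is available, everything else is bookkeeping, the only real care being to keep left versus right translations straight and to track semilinearity when converting the descent datum into the normalization condition on $\lambda(G)$.
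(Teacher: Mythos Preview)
The paper does not supply its own proof of this theorem: it is quoted verbatim as background from \cite[2.1]{GreitherPareigis1987}, and the remainder of the paper simply uses the statement as a black box to set up the enumeration problem for $R(D_n,[D_n])$. So there is no ``paper's proof'' to compare against; what you have written is a faithful outline of the original Greither--Pareigis argument (base change to $\tilde L$, classification of Hopf--Galois structures on the split algebra $\mathrm{Map}(S,\tilde L)$, and faithfully flat descent via the semilinear $G$-action), and as such it is correct in shape.

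One remark on the substantive step you flag yourself. Your argument that every Hopf algebra $H_0$ making $\mathrm{Map}(S,E)/E$ Hopf--Galois must be a group algebra $E[N]$ leans on the claim that the grouplikes of $H_0$ already form an $E$-basis. This does not follow merely from the fact that grouplikes act by algebra automorphisms and permute idempotents: a priori $H_0$ could have very few grouplikes. In the original argument one first shows, using the isomorphism $\mathrm{Map}(S,E)\otimes_E H_0\cong\mathrm{End}_E(\mathrm{Map}(S,E))$ and the idempotent structure of the split \'etale algebra, that $H_0$ is forced to be cocommutative and cosemisimple (equivalently, that its dual is a commutative separable $E$-algebra), and \emph{then} one concludes that $H_0$ has an $E$-basis of grouplikes. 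Your final paragraph gestures at this, but as written the sentence ``the Galois isomorphism \ldots\ forces these grouplikes to be a full $E$-basis of $H_0$'' skips the real work. If you intend this as a proof rather than a plan, that step needs to be filled in.
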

The Hopf algebras themselves are $(\tilde L[N])^{G}$ the fixed ring of $\tilde L[N]$ under the diagonal action of $G$ on $N$ and $\tilde L$ simultaneously. Although structural questions are interesting in their own right, our focus in this discussion is on determining the number and type of those $N$ that arise.\par
\noindent The enumerative side involves determining if there exist regular subgroups $N$ normalized by the image of $G$ under the induced map mentioned above, and we will be considering the case where $L/K$ is already a Galois extension.\par 
If $L/K$ is Galois with group $G$ then $S=G$ and the embedding of $G$ in $B$ is the left regular representation $\lambda:G\rightarrow Perm(G)$. As such, $N=\rho(G)$ where $\rho$ is the {\it right} regular representation itself gives a Hopf Galois structure, specifically $H_{N}=K[N]\cong K[G]$. The reason for this is that $\lambda(G)$ centralizes $\rho(G)$ so the diagonal action reduces to $G$ acting on $L$. The regularity of $\rho(G)$ is clear. We note also that if $G$ is non-abelian then $\lambda(G)$ normalizes itself and so $N=\lambda(G)$ will yield a {\it different} Hopf Galois structure on $L/K$ since $\lambda(G)\neq\rho(G)$ and $H_{\lambda(G)}=L[\lambda(G)]^G\not\cong K[G]\cong H_{\rho(G)}$. In general the group $N$ must have the same cardinality as $G$ by regularity, but need not be isomorphic to $G$ itself. We have the following definitions/conventions for the enumeration of the $N$ which give rise to Hopf-Galois structures.\par
\noindent Definitions:
\begin{itemize}
\item $B=Perm(G)\cong S_{_{|G|}}$
\item $R(G)=\{N \leq B | N \text{ regular},\ \lambda(G) \leq Norm_B(N)\}$
\item $R(G,[M])=\{N\in R(G) |\ N \cong M\}$ 
\item $Hol(N)=Norm_B(N)\cong N\rtimes Aut(N)$ for any $N\in R(G)$
\end{itemize}
where $[M]$ represents the different isomorphism classes of groups of order $|G|$.\par
The determination of $R(G,[M])$ for different pairings $(G,[M])$ has been studied by a number of authors, for example $R(G,[M])$ for $|G|=|M|=pq$ for $p,q$ prime in \cite{Byott2004pq}, $R(S_n,[S_n])$, $R(A_4,[A_4])$ and $R(G,[G])$ for $G$ simple in \cite{CarnahanChilds1999}, $R(C_{p^n},[M])$ in \cite{Kohl1998},  and others. In this paper, we shall restrict our attention to those structures arising due to $N\in R(G,[G])$, for $G=D_n$. We shall show the following.\par
{\bf Theorem:} 
$$
|R(D_n,[D_n])| = \begin{cases}
                       (\frac{n}{2}+2)|\Upsilon_n| \ \ \text{ if }8|n\\
                       (\frac{n}{2}+1)|\Upsilon_n| \ \ \text{ if }4|n \text{ but }8\nmid n\\
                       (n+1)|\Upsilon_n| \ \ \text{ if }2|n\text{ but }4\nmid n\\
                       |\Upsilon_n| \ \ \ \ \ \ \ \ \ \ \ \text{ if }n\text{ odd }\\
         \end{cases}
$$
where $\Upsilon_n=\{u\in U_n\ |\ u^2=1\}$ with $U_n$ the group of units mod $n$.\par
\section{The Left Regular Representations of $D_n$}
We assume that $L/K$ is Galois with group $G=D_n$ and so $B=Perm(D_n)$ which may be presented as
\begin{align*}
D_n &= \{x,t |\ x^n=1, t^2=1, xt=tx^{-1}\} \\
    &= \{1,x,x^2,\dots,x^{n-1},t,tx,tx^2,\dots,tx^{n-1}\}\\
\end{align*}
where $|D_n|=2n$, for $n\geq 3$. \par
With the presentation above, we begin with a number of observation about $D_n$ and its automorphism group. 
\begin{proposition}
\label{AutGamma}
For $n\geq 3$ with $D_n=\{t^ax^b|a\in\mathbb{Z}_{2};b\in\mathbb{Z}_{n} \}$ and letting $U_n=\mathbb{Z}_{n}^{*}$,\par
(a) $Aut(D_n)=\{\phi_{i,j} | i\in\mathbb{Z}_{n};j\in U_n\}$ where \par
\hskip0.45in $\phi_{i,j}(t^ax^b)=t^ax^{ia+jb}$ and $\phi_{{i_2},{j_2}}\circ\phi_{{i_1},{j_1}}=\phi_{i_2+j_2i_1,j_2j_1}$\par
(b) $C=\langle x\rangle$ is a characteristic subgroup of $D_n$\par
(c) $Aut(D_n)\cong Hol(\mathbb{Z}_n)$\par
\end{proposition}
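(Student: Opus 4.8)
The plan is to work from the internal structure of $D_n$ outward: first understand the orders of elements, use that to establish (b), then use (b) to force the shape of an arbitrary automorphism for (a), and finally recognize the composition law of (a) as the multiplication in $Hol(\mathbb{Z}_n)$ for (c).

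\textbf{Orders of elements and part (b).} From $xt=tx^{-1}$ one gets, by an easy induction, the identity $x^{b}t=tx^{-b}$, and hence $(tx^{b})^{2}=t(x^{b}t)x^{b}=t(tx^{-b})x^{b}=1$ for every $b$. So every element of $D_n\setminus\langle x\rangle$ has order dividing $2$; since $n\geq 3$ the generator $x$ has order $n>2$, so the elements of order $n$ are exactly the generators $x^{j}$ ($\gcd(j,n)=1$) of $\langle x\rangle$, and they generate $\langle x\rangle$. Any automorphism permutes the set of order-$n$ elements and therefore fixes $\langle x\rangle=C$ setwise; this proves (b). (Equivalently, $C$ is the unique cyclic subgroup of order $n$ because $D_n$ is non-abelian.)

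\textbf{Part (a).} Let $\phi\in Aut(D_n)$. By (b), $\phi(x)=x^{j}$ for a unique $j\in U_n$. Since $\phi$ is onto and $\langle x\rangle\neq D_n$, we must have $\phi(t)\notin\langle x\rangle$, so $\phi(t)=tx^{i}$ for a unique $i\in\mathbb{Z}_n$; multiplicativity then gives $\phi(t^{a}x^{b})=(tx^{i})^{a}(x^{j})^{b}=t^{a}x^{ia+jb}$, i.e. $\phi=\phi_{i,j}$. Conversely, for each $(i,j)\in\mathbb{Z}_n\times U_n$ I would check that $\phi_{i,j}$ respects the defining relations — $(x^{j})^{n}=1$, $(tx^{i})^{2}=1$, and $x^{j}\cdot tx^{i}=tx^{i-j}=tx^{i}\cdot(x^{j})^{-1}$ — so it extends to an endomorphism of $D_n$; it is onto because $x^{j}$ and $tx^{i}$ generate $D_n$ (as $j\in U_n$), hence it is an automorphism. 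Distinct pairs give distinct maps (evaluate at $x$ and at $t$), so $(i,j)\mapsto\phi_{i,j}$ is a bijection onto $Aut(D_n)$, and $|Aut(D_n)|=n\,\varphi(n)$. The composition rule is immediate substitution: $\phi_{i_2,j_2}(\phi_{i_1,j_1}(t^{a}x^{b}))=\phi_{i_2,j_2}(t^{a}x^{i_1a+j_1b})=t^{a}x^{(i_2+j_2i_1)a+(j_2j_1)b}$.

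\textbf{Part (c) and the main obstacle.} The composition formula in (a) is exactly the multiplication in the semidirect product $\mathbb{Z}_n\rtimes U_n$ with $U_n$ acting on $\mathbb{Z}_n$ by multiplication; since $Aut(\mathbb{Z}_n)\cong U_n$ under that action and $Hol(\mathbb{Z}_n)\cong\mathbb{Z}_n\rtimes Aut(\mathbb{Z}_n)$, the map $\phi_{i,j}\mapsto(i,j)$ is an isomorphism onto $Hol(\mathbb{Z}_n)$, giving (c). I do not expect a serious obstacle: the only points requiring care are that for even $n$ the subgroup $C$ is not the unique index-$2$ subgroup, so (b) must be argued through orders of elements rather than through index, and that in (a) one must rule out the a priori possibility $\phi(t)=x^{n/2}$ — which is excluded precisely by surjectivity of $\phi$. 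Everything else is routine manipulation with $x^{b}t=tx^{-b}$.
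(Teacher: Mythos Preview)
Your argument is correct in all three parts. The paper itself states this proposition as a standard observation and gives no proof, so there is no ``paper's own proof'' to compare against; your write-up supplies exactly the routine verification the author omitted. The one subtlety you flagged---that for even $n$ the cyclic subgroup $C$ is not characterized by its index, so (b) must go through element orders, and that $\phi(t)=x^{n/2}$ is excluded by surjectivity rather than by order---is handled correctly.
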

Now, naively, it would seem that the search for any $N\in R(D_n,[D_n])$ would require one to search within the entire ambient symmetric group $B=Perm(D_n)\cong S_{2n}$. However, we shall use structural information about how $\lambda(D_n)$ (and concordantly any $N\in R(D_n,[D_n])$) acts in order to restrict our search to a relatively smaller group within $B$. The beginning of this centers around the left regular representation itself.
\begin{lemma}
For $D_n$ as presented above, the cycle structure of $\lx$ and $\lt$ is given as follows:
\begin{align*}
\lx&=(1,x,x^2,\dots,x^{n-1})(t\ tx^{n-1}\ \dots tx)\\
\lt&=(1\ t)(x\ tx)\cdots (x^{n-1}\ tx^{n-1})\\
\end{align*}
for any $n\geq 3$.
\end{lemma}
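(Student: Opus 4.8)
The plan is to compute $\lx$ and $\lt$ directly from the definition of the left regular representation, $\lambda(g)\colon h\mapsto gh$, using only the defining relations $x^n=1$, $t^2=1$, and $xt=tx^{-1}$, equivalently $x^{k}t=tx^{-k}$ for every $k$.

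First I would treat $\lx$. On the subgroup $\cx=\{1,x,\dots,x^{n-1}\}$ left multiplication by $x$ sends $x^{k}\mapsto x^{k+1}$, with exponents read modulo $n$, which is visibly the $n$-cycle $(1,x,\dots,x^{n-1})$. On the coset $\{t,tx,\dots,tx^{n-1}\}$ I would instead use $x\cdot tx^{k}=(xt)x^{k}=tx^{-1}x^{k}=tx^{k-1}$, so that left multiplication by $x$ \emph{decreases} the reflection exponent by one; starting from $t=tx^{0}$ this traces the cycle $t\mapsto tx^{n-1}\mapsto tx^{n-2}\mapsto\cdots\mapsto tx\mapsto t$. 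Since $\cx$ and its nontrivial coset partition $D_n$ and each is preserved by $\lx$, the cycle decomposition of $\lx$ is exactly the product of these two disjoint $n$-cycles.

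For $\lt$ I would simply observe that $t\cdot x^{k}=tx^{k}$ while $t\cdot tx^{k}=t^{2}x^{k}=x^{k}$, so $\lt$ interchanges $x^{k}$ with $tx^{k}$ for each $k=0,\dots,n-1$ and fixes no element of $D_n$; hence $\lt=(1\ t)(x\ tx)\cdots(x^{n-1}\ tx^{n-1})$, a product of $n$ disjoint transpositions.

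The argument is entirely routine, so there is no real obstacle; the only point demanding a little care is the exponent bookkeeping modulo $n$ when commuting $x$ past $t$, together with the observation that $\lx$ respects the partition of $D_n$ into the two cosets of $\cx$, which is what forces its cycle type to be a product of two $n$-cycles rather than something mixing the cosets.
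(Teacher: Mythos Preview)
Your computation is correct: the paper states this lemma without proof, treating it as an immediate consequence of the definition of $\lambda$ and the relation $xt=tx^{-1}$, and your direct verification is exactly the routine check the paper leaves to the reader.
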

The observation that $C=\langle x\rangle$ is characteristic in $D_n$ carries over to the support of the cycles that make up $\lx\in \lambda(C)$ since $\phi_{i,j}(x^b)=x^{jb}$ and $\phi_{i,j}(tx^b)=tx^{i+jb}$ which will be explored fully in the next section.
\section{Wreath Products and Blocks}
\begin{definition}
If $G$ is a permutation group acting on a set $Z$ then a {\it block} for $G$ is a subset $X\subseteq Z$ such that for $g\in G$, $X^{g} = X$ or $X^{g}\cap X=\emptyset$.
\end{definition}
In our example, we shall consider $Z$ as the underlying set of $D_n$ and look at blocks arising from subgroups of $B=Perm(D_n)$ and in particular how regularity ties in with these block structures. These results are obviously standard in the theory of block systems and wreath products, but we present them here for convenience and for the particular application to blocks coming from these regular subgroups isomorphic to $D_n$. \par
Recalling our presentation of $D_n$ define :
\begin{align*}
 X&=\{1,x,x^2,\dots,x^{n-1}\} \\
 Y&=\{t,tx,tx^2,\dots,tx^{n-1}\} \\
 Z&=X\cup Y \\
\end{align*}
where $Z=G$ (as sets) and $B\cong Perm(Z)$. Next, define $\tau_{*}:X\rightarrow Y$ by $\tau_{*}(x^j)=tx^j$ which induces an isomorphism $Perm(X)\rightarrow Perm(Y)$. From here on, we set $B_X=Perm(X)$ and $B_Y=Perm(Y)$ and consider the semi-direct product
$$W(X,Y)=(B_X\times B_Y) \rtimes \langle \tau \rangle$$
where $\tau$  has order 2 and is defined as follows:
\begin{align*}
\tau(\beta)(x)&=\tau_{*}^{-1}(\beta(\tau_{*}(x))) \text{ for }  x\in X \text{ and } \beta\in B_Y \\
\tau(\alpha)(y)&=\tau_{*}(\alpha(\tau_{*}^{-1}(y))) \text{ for } y\in Y \text{ and } \alpha\in B_X \\
\end{align*}
If, for $\alpha\in B_X$ and $\beta\in B_Y$ we denote $\tau(\alpha)=\widehat{\alpha}$ and $\tau(\beta)=\widehat{\beta}$ then $\tau(\alpha,\beta)=(\widehat{\beta},\widehat{\alpha})$. As $B_X\cong B_Y\cong S_{n}$ and $\langle\tau\rangle\cong S_2$ we find that 
$$W(X,Y)\cong S_{n}\wr S_2$$
the wreath product of $S_{n}$ and $S_2$.\par
\noindent Note: As an element of $B$,
\begin{align*}
\tau&=(1,t)(x,tx)\dots(x^{n-1},tx^{n-1})\\
    &=\lambda(t)\\
\end{align*}
Define a map $\delta:W(X,Y)\rightarrow B=Perm(Z)$ by 
$$\delta(\alpha,\beta,\tau^k)(z)=
\begin{cases}
\beta(\tau_{*}(z)),\ k=1,\ z\in X \\ \alpha(z),\ k=0,\ z\in X \\
                              \alpha(\tau_{*}^{-1}(z)),\ k=1,\ z\in Y \\ \beta(z),\ k=0,\ z\in Y \\
\end{cases}
$$
\begin{proposition}
$\delta$ as defined above is an embedding of $W(X,Y)$ as a subgroup of $B$.
\end{proposition}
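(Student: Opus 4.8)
The plan is to check the three things packaged into the word ``embedding'': that each $\delta(\alpha,\beta,\tau^k)$ is genuinely a bijection of $Z$, that $\delta$ is a group homomorphism, and that $\ker\delta$ is trivial. Well-definedness I would dispatch first and quickly. Every element of $W(X,Y)=(B_X\times B_Y)\rtimes\langle\tau\rangle$ has the form $(\alpha,\beta,\tau^k)$ with $k\in\{0,1\}$. For $k=0$ the map $\delta(\alpha,\beta,1)$ acts as $\alpha$ on $X$ and as $\beta$ on $Y$, so it is the bijection of $Z=X\sqcup Y$ built from the two bijections $\alpha$ and $\beta$; for $k=1$ it sends $X$ to $Y$ by $z\mapsto\beta(\tau_*(z))$ and $Y$ to $X$ by $z\mapsto\alpha(\tau_*^{-1}(z))$, each a composite of bijections, so $\delta(\alpha,\beta,\tau)$ is the bijection of $Z$ that interchanges $X$ and $Y$. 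In all cases $\delta(\alpha,\beta,\tau^k)\in\mathrm{Perm}(Z)=B$.

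The real content is multiplicativity, $\delta(w_1w_2)=\delta(w_1)\circ\delta(w_2)$. I would begin by recording the product rule in $W(X,Y)$: writing ${}^{\tau}(\alpha,\beta)=(\widehat{\beta},\widehat{\alpha})$ and letting $\tau^0$ act trivially, one has $(\alpha_1,\beta_1,\tau^{k_1})(\alpha_2,\beta_2,\tau^{k_2})=\bigl((\alpha_1,\beta_1)\cdot{}^{\tau^{k_1}}\!(\alpha_2,\beta_2),\ \tau^{k_1+k_2}\bigr)$, so that the $B_X\times B_Y$ part is $(\alpha_1\alpha_2,\beta_1\beta_2)$ when $k_1=0$ and $(\alpha_1\widehat{\beta_2},\beta_1\widehat{\alpha_2})$ when $k_1=1$. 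Then I would verify $\delta(w_1w_2)(z)=\delta(w_1)\bigl(\delta(w_2)(z)\bigr)$ by running through the four choices of $(k_1,k_2)\in\{0,1\}^2$ and, inside each, the two choices $z\in X$ and $z\in Y$. Each of these eight cases is a direct substitution: evaluate $\delta(w_2)(z)$, note whether it lands in $X$ or $Y$ according to the parity of $k_2$, apply the matching branch of $\delta(w_1)$, and compare with the left side after expanding $\widehat{\alpha_2}$ or $\widehat{\beta_2}$ using $\tau(\beta)(x)=\tau_*^{-1}(\beta(\tau_*(x)))$ and $\tau(\alpha)(y)=\tau_*(\alpha(\tau_*^{-1}(y)))$. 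The internal $\tau_*$ and $\tau_*^{-1}$ cancel in pairs and the two sides agree.

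For injectivity I would argue: if $\delta(\alpha,\beta,\tau^k)=\mathrm{id}_Z$ then $k\neq 1$, since $\delta(\alpha,\beta,\tau)$ interchanges the nonempty disjoint sets $X$ and $Y$ and hence fixes nothing; so $k=0$, and restricting $\delta(\alpha,\beta,1)=\mathrm{id}_Z$ to $X$ and to $Y$ forces $\alpha=\mathrm{id}_X$ and $\beta=\mathrm{id}_Y$, i.e. $(\alpha,\beta,\tau^k)$ is the identity of $W(X,Y)$. Thus $\delta$ is an injective homomorphism and $\delta(W(X,Y))$ is a subgroup of $B$ isomorphic to $W(X,Y)\cong S_n\wr S_2$.

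I do not anticipate a genuine obstacle; the only real hazard is the bookkeeping in the multiplicativity step, namely keeping the composition order consistent and correctly handling the twist by $\tau$ in the mixed cases $k_1=1$, where the $B_X$- and $B_Y$-coordinates are swapped before being multiplied. An alternative that trims the exposition is to observe that $\delta$ is just the natural action of $W(X,Y)$ on $Z$ in which $B_X$ permutes $X$ while fixing $Y$ pointwise, $B_Y$ permutes $Y$ while fixing $X$ pointwise, and $\tau$ acts as $\lambda(t)$; if one is willing to treat ``this prescription defines an action'' as clear, multiplicativity is free, but establishing that the abstract $\tau$-twisting coincides with conjugation by $\lambda(t)$ inside $B$ amounts to the very same eight-case check.
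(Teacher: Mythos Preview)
Your proposal is correct and follows essentially the same approach as the paper's (commented-out) proof: a case-by-case verification of multiplicativity over the four choices of $(k_1,k_2)$, together with the same injectivity argument that $k=1$ is impossible since $X$ and $Y$ are interchanged, while $k=0$ forces $\alpha$ and $\beta$ to be identities. The only cosmetic difference is that the paper treats just $z\in X$ and invokes symmetry for $z\in Y$, whereas you propose checking all eight cases explicitly; you also add the well-definedness remark that each $\delta(\alpha,\beta,\tau^k)$ is a bijection of $Z$, which the paper leaves implicit.
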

We need to make a number of other observations about wreath products such as $W(X,Y)$, which we identify with $\delta(W(X,Y))\leq B$. 
\begin{lemma}
If $w\in W(X,Y)$ then either $w(X)=X$ and $w(Y)=Y$ or $w(X)=Y$ and $w(Y)=X$.
\end{lemma}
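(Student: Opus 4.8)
The plan is to argue directly from the definition of the embedding $\delta$, splitting on the value of $k$ in a general element $w=\delta(\alpha,\beta,\tau^k)$ of $W(X,Y)$. First I would dispose of the case $k=0$: by definition $\delta(\alpha,\beta,1)$ acts as $\alpha$ on $X$ and as $\beta$ on $Y$, and since $\alpha\in B_X=Perm(X)$ and $\beta\in B_Y=Perm(Y)$ are bijections of $X$ and of $Y$ respectively, we get $w(X)=\alpha(X)=X$ and $w(Y)=\beta(Y)=Y$. This is the first alternative.

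Next I would treat $k=1$. Here the definition of $\delta$ gives $w(z)=\beta(\tau_*(z))$ for $z\in X$; since $\tau_*\colon X\to Y$ is the bijection $x^j\mapsto tx^j$ and $\beta$ permutes $Y$, the composite $\beta\circ\tau_*$ is a bijection $X\to Y$, whence $w(X)=Y$. Symmetrically, $w(z)=\alpha(\tau_*^{-1}(z))$ for $z\in Y$, and since $\tau_*^{-1}\colon Y\to X$ is a bijection and $\alpha$ permutes $X$, we get $w(Y)=X$. This is the second alternative. Because every element of $W(X,Y)$ has the form $\delta(\alpha,\beta,\tau^k)$ with $k\in\{0,1\}$, these two cases are exhaustive and mutually exclusive, which yields exactly the stated dichotomy.

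I do not anticipate any genuine obstacle: the content of the lemma is essentially a reformulation of the block-diagonal-versus-antidiagonal shape built into the construction of $\delta$, so the only point requiring any care is verifying that no ``partial mixing'' of $X$ and $Y$ can occur, and this is immediate once one observes that $\alpha$ and $\beta$ act within $X$ and $Y$ while $\tau$ (i.e.\ $\tau_*^{\pm1}$) interchanges them. It is worth noting for later use that this lemma says precisely that $\{X,Y\}$ is a block system for $W(X,Y)$ acting on $Z$.
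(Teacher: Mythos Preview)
Your proposal is correct and follows exactly the same approach as the paper: split on $k\in\{0,1\}$ and read off from the definition of $\delta$ that $w$ preserves $X$ and $Y$ when $k=0$ and swaps them when $k=1$. The paper's proof is simply a terser version of what you wrote.
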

\begin{proof}
For $k=0$ we have exactly $w(X)=X$ and $w(Y)=Y$ by the definition of $\delta$, the reverse set mappings occur for $k=1$.
\end{proof}
As such, we may regard $W(X,Y)$ as the maximal subgroup of $B$ for which $X$ is a block. 
Although we shall use the above indicated choice of $\tau_{*}$, it is useful to observe the following.
\begin{proposition}
For any two bijections $\tau_{*}$ and $\tau_{*}'$ of $X$ to $Y$, the induced wreath products $W$ and $W'$ are equal as subgroups of $B$.
\end{proposition}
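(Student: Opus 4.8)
The plan is to show that each of $W$ and $W'$ coincides with one and the same subgroup of $B$ whose definition makes no reference to a bijection $X\to Y$. The natural isolating step is to observe that $H:=\delta(B_X\times B_Y\times\{1\})$ is exactly the set of permutations of $Z$ that fix $X$ (equivalently $Y$) setwise: by the defining cases of $\delta$, the $k=0$ part of $\delta$ involves no $\tau_*$ at all, so $H$ is literally the same subgroup of $B$ whether one builds it from $\tau_*$ or from $\tau_*'$. Since $W=\langle H,\tau\rangle$ and $W'=\langle H,\tau'\rangle$, where I write $\tau\in B$ for the image of the order-two generator of $W$ and $\tau'\in B$ for that of $W'$ (concretely $\tau=\prod_j(x^j,\tau_*(x^j))$ and $\tau'=\prod_j(x^j,\tau_*'(x^j))$), it suffices to compare these two ``swap'' generators.

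The key step is the one-line observation that $\tau$ and $\tau'$ lie in the same coset of $H$. Both are involutions sending $X$ to $Y$ and $Y$ to $X$, so the composite $\tau'\tau$ sends $X$ to $X$ and $Y$ to $Y$ and hence belongs to $H$. Then $\tau'=(\tau'\tau)\tau\in\langle H,\tau\rangle=W$, which gives $W'\subseteq W$; interchanging the roles of $\tau$ and $\tau'$ gives $W\subseteq W'$, and therefore $W=W'$.

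I do not anticipate a genuine obstacle here; what remains is only bookkeeping. One should confirm that the image of the $k=1$ generator under $\delta$ really is the transposition product $\prod_j(x^j,\tau_*(x^j))$ (immediate from the case $z\in X$, $k=1$ and $z\in Y$, $k=1$ of the definition of $\delta$), and that $H$ is precisely the pointwise-block stabilizer $\{w\in B\mid w(X)=X\}$, i.e.\ that any permutation fixing $X$ and $Y$ setwise is of the form $\delta(\alpha,\beta,1)$ — immediate by restricting such a permutation to $X$ and to $Y$. An alternative, marginally more conceptual packaging would be to prove directly that $W=\{w\in B\mid w(X)\in\{X,Y\}\}$, using the preceding lemma for the inclusion ``$\subseteq$'' and the membership $\tau\in W$ for ``$\supseteq$'', after which independence from the choice of $\tau_*$ is manifest; I would nonetheless present the coset computation $\tau'\tau\in H$ since it is the shortest path to the stated equality.
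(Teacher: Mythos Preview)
Your argument is correct. The route differs from the paper's in presentation, though the underlying content is the same. The paper proceeds element by element: given $(\alpha,\beta,\tau^k)$ in $W$, it writes down an explicit $(\alpha',\beta',{\tau'}^{k'})$ in $W'$ with the same image in $B$, namely $\alpha'=\alpha$, $\beta'=\beta$ when $k=0$, and $\alpha'(x)=\alpha(\tau_*^{-1}(\tau_*'(x)))$, $\beta'(y)=\beta(\tau_*({\tau_*'}^{-1}(y)))$ when $k=1$. Your version isolates the $\tau_*$-independent piece $H=\{w\in B:w(X)=X\}$ once and for all, and then disposes of the swap generators with the single coset computation $\tau'\tau\in H$. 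What the paper's formulas buy is an explicit dictionary between the two parametrizations of $W$; what your packaging buys is that one never has to write those formulas down, and the characterization $W=\{w\in B:w(X)\in\{X,Y\}\}$ drops out immediately (which the paper states separately just after this proposition). Either is fine here.
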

\begin{proof}
Basically we want to, given $(\alpha,\beta,\tau^k)$ in $W$ induced by $\tau_{*}$, find a corresponding $(\alpha',\beta',{\tau'}^{k'})$ which yields the same permutation of $Z$. For $k=0$ we can choose $k'=0$ and simply let $\alpha'=\alpha$ and $\beta'=\beta$. For $k=1$ we have $k'=1$ and we define 
\begin{align*}
\beta'(y)&=\beta(\tau_{*}({\tau_{*}'}^{-1} (y)))\text{ for y}\in Y \\
\alpha'(x)&=\alpha(\tau_{*}^{-1}({\tau_{*}'}(x)))\text{ for x}\in X \\
\end{align*}
One can easily check that $\beta(\tau_{*}(x))=\beta'({\tau_{*}'}(x))$ for $x\in X$ and $\alpha(\tau_{*}^{-1}(y))=\alpha'({\tau'}^{-1}(y))$ for $y\in Y$.
\end{proof}
\begin{definition} For $Z$ such that $|Z|=2n$, a {\it splitting} $\{X,Y\}$ of $Z$ is a partition of $Z$ into two equal size subsets.\end{definition}
\noindent As we can see, for a given splitting $\{X,Y\}$ of $Z$ every bijection $\tau_{*}:X\rightarrow Y$ yields the same subgroup of $B$ which we may denote $W(X,Y;\tau_{*})$ or simply $W(X,Y)$. Note now that for an arbitrary splitting $\{X,Y\}$ that $w\in W(X,Y)$ if and only if $w(X)=X$ and $w(Y)=Y$ or $w(X)=Y$ and $w(Y)=X$.\par
\begin{proposition}
For a given splitting $\{X,Y\}$ and $\sigma\in B$, we have 
$$\sigma W(X,Y)\sigma^{-1}=W(X^{\sigma},Y^{\sigma})$$
where $X^{\sigma}=\sigma(X)$ and $Y^{\sigma}=\sigma(Y)$.
\end{proposition}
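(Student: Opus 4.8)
The plan is to avoid the explicit cocycle description of $W(X,Y)$ via $\delta$ entirely and instead use the intrinsic characterization established just above: for any splitting $\{X,Y\}$ of $Z$, an element $w\in B$ lies in $W(X,Y)$ if and only if $w$ permutes the two-element partition $\{X,Y\}$, i.e.\ either $w(X)=X,\ w(Y)=Y$ or $w(X)=Y,\ w(Y)=X$. Once we phrase membership this way, conjugation becomes a one-line computation. (Note first that $\{X^{\sigma},Y^{\sigma}\}$ really is a splitting of $Z$, since $\sigma$ is a bijection of $Z$, so $|X^{\sigma}|=|X|=n$ and $X^{\sigma}\cup Y^{\sigma}=\sigma(X\cup Y)=Z$ with $X^{\sigma}\cap Y^{\sigma}=\emptyset$; hence $W(X^{\sigma},Y^{\sigma})$ is defined.)

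For the inclusion $\sigma W(X,Y)\sigma^{-1}\subseteq W(X^{\sigma},Y^{\sigma})$, take $w\in W(X,Y)$ and compute the image of $X^{\sigma}=\sigma(X)$ under $\sigma w\sigma^{-1}$: we get $(\sigma w\sigma^{-1})(\sigma(X))=\sigma(w(X))$, which equals $\sigma(X)=X^{\sigma}$ if $w(X)=X$ and equals $\sigma(Y)=Y^{\sigma}$ if $w(X)=Y$. The same computation applied to $Y^{\sigma}$ shows that $\sigma w\sigma^{-1}$ sends $\{X^{\sigma},Y^{\sigma}\}$ to itself, so by the characterization $\sigma w\sigma^{-1}\in W(X^{\sigma},Y^{\sigma})$.

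For the reverse inclusion, I would simply run the same argument with $\sigma$ replaced by $\sigma^{-1}$ and the splitting $\{X,Y\}$ replaced by $\{X^{\sigma},Y^{\sigma}\}$, using that $\sigma^{-1}(X^{\sigma})=X$ and $\sigma^{-1}(Y^{\sigma})=Y$; this gives $\sigma^{-1}W(X^{\sigma},Y^{\sigma})\sigma\subseteq W(X,Y)$, and conjugating both sides by $\sigma$ yields $W(X^{\sigma},Y^{\sigma})\subseteq\sigma W(X,Y)\sigma^{-1}$. Combining the two inclusions gives the claimed equality.

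There is essentially no hard step here: the only thing to be careful about is that the statement is used for \emph{arbitrary} splittings (not just the original $Z=X\cup Y$ coming from $D_n$), which is exactly why I would invoke the intrinsic "preserves the partition $\{X,Y\}$" description rather than the formula for $\delta$ — the latter is tied to a chosen bijection $\tau_{*}$ and would make the conjugation bookkeeping unpleasant, whereas the partition description is manifestly conjugation-equivariant.
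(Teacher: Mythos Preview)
Your argument is correct and is essentially the paper's own proof: both use the intrinsic ``preserves the partition $\{X,Y\}$'' characterization and the one-line computation $(\sigma w\sigma^{-1})(X^{\sigma})=\sigma(w(X))$. The only difference is that you spell out the reverse inclusion via $\sigma^{-1}$, whereas the paper shows only $\sigma W(X,Y)\sigma^{-1}\subseteq W(X^{\sigma},Y^{\sigma})$ and leaves equality implicit.
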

\begin{proof}
First, observe that $\{X,Y\}$ a splitting implies that $\{X^{\sigma},Y^{\sigma}\}$ is too. If $w\in W(X,Y)$ with $w(X)=X$ and $w(Y)=Y$ then $\sigma w \sigma^{-1}(X^{\sigma})=\sigma w\sigma^{-1}(\sigma(X))=\sigma w(X)=\sigma(X)=X^{\sigma}$, similarly $\sigma w \sigma^{-1}(Y^{\sigma})=Y^{\sigma}$. Hence $\sigma w \sigma^{-1}\in W(X^{\sigma},Y^{\sigma})$. If $w(X)=Y$ and $w(Y)=X$ then $\sigma w \sigma^{-1}(X^{\sigma})=Y^{\sigma}$ and $\sigma w \sigma^{-1}(Y^{\sigma})=X^{\sigma}$ and again $\sigma w \sigma^{-1}\in W(X^{\sigma},Y^{\sigma})$.
\end{proof}
\begin{corollary}
\label{NW=W}
$Norm_B(W(X,Y))=W(X,Y)$
\end{corollary}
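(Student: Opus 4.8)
Since $W(X,Y)\subseteq Norm_B(W(X,Y))$ trivially, the plan is to establish the reverse containment. The main tool is the preceding proposition: for $\sigma\in B$ one has $\sigma W(X,Y)\sigma^{-1}=W(X^{\sigma},Y^{\sigma})$, so $\sigma$ lies in $Norm_B(W(X,Y))$ precisely when $W(X^{\sigma},Y^{\sigma})=W(X,Y)$ as subgroups of $B$. On the other hand, the characterization of membership in a wreath product recorded just before that proposition says that $\sigma\in W(X,Y)$ as soon as $\sigma(X)\in\{X,Y\}$. Thus the whole statement reduces to the assertion that a splitting is recoverable from the wreath product it generates: $W(X^{\sigma},Y^{\sigma})=W(X,Y)$ should force $\{X^{\sigma},Y^{\sigma}\}=\{X,Y\}$.

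To prove this I would argue by contradiction: suppose $\sigma$ normalizes $W:=W(X,Y)$ but $\{X^{\sigma},Y^{\sigma}\}\neq\{X,Y\}$. Then $X^{\sigma}$ is neither $X$ nor $Y$, and since all four sets have size $n$, $X^{\sigma}$ must meet both $X$ and $Y$; fix a point $p\in X\cap X^{\sigma}$. Applying the lemma (every element of a wreath product either fixes both halves of its splitting or interchanges them) to $W=W(X^{\sigma},Y^{\sigma})$ shows that $\{X^{\sigma},Y^{\sigma}\}$ is a block system for $W$. Consequently every element of the point stabilizer $Stab_W(p)$ fixes the block $X^{\sigma}$ setwise, so $X^{\sigma}$ is a union of $Stab_W(p)$-orbits on $Z$.

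The remaining step is the routine identification of those orbits. From the formula for $\delta$, a point of $X$ is moved out of $X$ by every element with $k=1$, so $Stab_W(p)=Stab_{B_X}(p)\times B_Y$, and (using $n\geq 3$, so that $S_{n-1}$ is transitive on $X\setminus\{p\}$) its orbits on $Z$ are exactly $\{p\}$, $X\setminus\{p\}$, and $Y$. Since $X^{\sigma}$ contains $p$, is a union of these three orbits, and has cardinality $n$, the only possibility is $X^{\sigma}=\{p\}\cup(X\setminus\{p\})=X$, contradicting $X^{\sigma}\neq X$. Hence $\{X^{\sigma},Y^{\sigma}\}=\{X,Y\}$, so $\sigma\in W(X,Y)$, and $Norm_B(W(X,Y))=W(X,Y)$.

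The genuinely content-bearing point is the reduction in the first paragraph, namely trading normalizer membership (via the conjugation proposition) for equality of wreath products and then for equality of splittings; once one knows to look at the point stabilizer of $p$ and its orbit structure, the argument closes immediately, so I do not anticipate a serious obstacle beyond keeping straight which of $X,Y,X^{\sigma},Y^{\sigma}$ meets which. The only edge case worth a second glance is the transitivity claim for $S_{n-1}$, which is harmless under the standing hypothesis $n\geq 3$.
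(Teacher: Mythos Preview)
Your argument is correct. The paper records this as a bare corollary of the conjugation formula $\sigma W(X,Y)\sigma^{-1}=W(X^{\sigma},Y^{\sigma})$ and supplies no proof, so the only thing to compare is the implicit step you made explicit: that $W(X,Y)=W(X',Y')$ forces $\{X,Y\}=\{X',Y'\}$. Your route via the orbit decomposition of a point stabilizer is a clean and standard way to recover the block system from the group, and it closes the argument exactly as intended. One cosmetic remark: the transitivity of $Stab_{B_X}(p)$ on $X\setminus\{p\}$ already holds for $n\geq 2$, so invoking $n\geq 3$ there is harmless but not actually needed.
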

\noindent As a small aside, we can consider, for a given $\{X,Y\}$ the subgroup $S(X,Y)=B_X\times B_Y$ of $B$.
\begin{proposition} $S(X,Y)\triangleleft W(X,Y)$ and, in fact, $Norm_B(S(X,Y))=W(X,Y)$.
\end{proposition}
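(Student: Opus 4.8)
The plan is to handle both assertions through a single structural observation: the orbits of $S(X,Y)=B_X\times B_Y$ acting on $Z$ are exactly the two sets $X$ and $Y$. This is immediate, since $B_X$ acts as the full symmetric group on $X$ while fixing $Y$ pointwise, and symmetrically $B_Y$ acts as the full symmetric group on $Y$ while fixing $X$ pointwise; as $|X|=|Y|=n\geq 3$ there is no degeneracy. Everything else follows from this together with the fact already recorded above that $w\in W(X,Y)$ if and only if $w$ either preserves or interchanges the splitting $\{X,Y\}$.

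For $S(X,Y)\triangleleft W(X,Y)$ I would argue directly from the description $W(X,Y)=(B_X\times B_Y)\rtimes\langle\tau\rangle$: conjugation by an element of $B_X\times B_Y$ clearly preserves $B_X\times B_Y$, and since $\tau(\alpha,\beta)=(\widehat{\beta},\widehat{\alpha})$ with $\widehat{\beta}\in B_X$ and $\widehat{\alpha}\in B_Y$, conjugation by $\tau$ preserves it as well. (Equivalently, for any $w\in W(X,Y)$ one has $w\,(B_X\times B_Y)\,w^{-1}=B_{w(X)}\times B_{w(Y)}=B_X\times B_Y$, because $\{w(X),w(Y)\}=\{X,Y\}$.) In particular $W(X,Y)\subseteq Norm_B(S(X,Y))$.

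It remains to prove $Norm_B(S(X,Y))\subseteq W(X,Y)$. Let $\sigma\in B$ with $\sigma S(X,Y)\sigma^{-1}=S(X,Y)$. Then $\sigma$ permutes the orbits of $S(X,Y)$: for any $z\in Z$ we have $\sigma\bigl(S(X,Y)\cdot z\bigr)=\bigl(\sigma S(X,Y)\sigma^{-1}\bigr)\cdot\sigma(z)=S(X,Y)\cdot\sigma(z)$, which is again an orbit. Since the orbit set is $\{X,Y\}$, we conclude $\{\sigma(X),\sigma(Y)\}=\{X,Y\}$, so $\sigma$ either fixes $X$ and $Y$ setwise or interchanges them, and therefore $\sigma\in W(X,Y)$. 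Combining the two inclusions gives $Norm_B(S(X,Y))=W(X,Y)$.

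I do not expect a genuine obstacle here; the only point requiring care is the verification that $S(X,Y)$ has no orbit on $Z$ other than $X$ and $Y$ — which is where the hypothesis $n\geq 3$ (or, more modestly, $n\geq 1$) enters — after which the characterization of $W(X,Y)$ as the setwise stabilizer of the splitting $\{X,Y\}$ finishes the argument.
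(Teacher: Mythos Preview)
Your proof is correct and follows essentially the same line as the paper's. The paper proves normality by the case split on whether $w$ fixes or swaps $X$ and $Y$ (using the characterization $\sigma\in S(X,Y)\iff \sigma(X)=X,\ \sigma(Y)=Y$), and for the normalizer it simply invokes the argument of Corollary~\ref{NW=W}; your orbit formulation is just a repackaging of that same idea, since ``$X$ and $Y$ are the $S(X,Y)$-orbits'' is exactly the characterization the paper is using.
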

\begin{proof}
Observe that $\sigma\in B$ is an element of $S(X,Y)$ if and only if $\sigma(X)=X$ and $\sigma(Y)=Y$. If $w\in W(X,Y)$ and $w(X)=X$ and $w(Y)=Y$ then clearly $w\sigma w^{-1}\in S(X,Y)$. Similarly, if $w(X)=Y$ and $w(Y)=X$ then $w\sigma w^{-1}(X)=w\sigma(Y)=X$ and likewise $w\sigma w^{-1}(Y)=Y$.  The proof of the second assertion is identical to that of \ref{NW=W}.
\end{proof}
\noindent Note, $W(X,Y)=S(X,Y)\cup S(X,Y)\tau$ for any $\tau$ induced by $\tau_{*}:X\rightarrow Y$. \par
Before considering the enumeration of $R(D_n,[D_n])$ we shall first consider how regularity and block structure are connected. The following is basically \cite[Theorem 1.6A (i)]{DixonMortimer1996}, the point being that $K$ giving rise to $\{X,Y\}$ is an example of a normal block system. \par
\begin{proposition} If $N\leq B$ is regular then $N\leq W(X,Y)$ if and only if $N$ contains an index 2 subgroup $K$ with $X=K\cdot 1$.
\label{I2K}
\end{proposition}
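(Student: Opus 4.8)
The plan is to translate the condition $N\le W(X,Y)$ into the statement that each element of $N$ either stabilizes the block $X$ or interchanges it with $Y$, and then to exploit the fact that a regular subgroup acts freely, so that orbit sizes coincide with subgroup orders. Throughout, recall that $1\in X$ with our choice $X=\{1,x,\dots,x^{n-1}\}$, and that $\sigma\in S(X,Y)$ if and only if $\sigma(X)=X$ and $\sigma(Y)=Y$.

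For the forward direction I would argue as follows. Assuming $N\le W(X,Y)$, the lemma describing the elements of $W(X,Y)$ shows that intersecting $N$ with $S(X,Y)=B_X\times B_Y$ produces a subgroup $K=N\cap S(X,Y)$ consisting of exactly those $\eta\in N$ with $\eta(X)=X$ (equivalently, $K$ is the kernel of the ``sign'' homomorphism $N\to\langle\tau\rangle\cong\mathbb{Z}/2$), so $[N:K]\le 2$. The crucial point is that this index is exactly $2$: otherwise every element of $N$ would fix $X$ setwise, and since $1\in X$ this would give $N\cdot1\subseteq X$, contradicting the transitivity of $N$ on $Z$, which forces $N\cdot1=Z$. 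Once $[N:K]=2$ is known, $K$ fixes $X$ setwise and contains the identity, so $K\cdot1\subseteq X$; and because $K\le N$ acts freely, the point stabilizer of $1$ in $K$ is trivial, whence $|K\cdot1|=|K|=n=|X|$ and therefore $K\cdot1=X$.

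For the converse, suppose $K\le N$ has index $2$ with $K\cdot1=X$. First I would verify that $K$ stabilizes the block $X$: freeness and transitivity make $\kappa\mapsto\kappa(1)$ a bijection $K\to X$, so for $\kappa\in K$ and any $x\in X$, writing $x=\kappa'(1)$ gives $\kappa(x)=(\kappa\kappa')(1)\in K\cdot1=X$, hence $\kappa(X)=X$ and also $\kappa(Y)=Y$. Then, for $\eta\in N\setminus K$, the coset decomposition $N=K\sqcup\eta K$ gives $Z=N\cdot1=X\cup\eta(K\cdot1)=X\cup\eta(X)$, and since $|X|=|\eta(X)|=n$ this union is disjoint, so $\eta(X)=Y$ and consequently $\eta(Y)=X$. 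Thus every element of $N$ either stabilizes both $X$ and $Y$ or interchanges them, which by the characterization of $W(X,Y)$ as the full stabilizer in $B$ of the splitting $\{X,Y\}$ means $N\le W(X,Y)$.

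I expect the only genuine subtlety to be the surjectivity of the sign map $N\to\langle\tau\rangle$ in the forward direction --- that is, ruling out $N\subseteq S(X,Y)$ --- which is precisely where transitivity of the regular action is used; the remaining steps are routine orbit--stabilizer bookkeeping for the free action of $K$.
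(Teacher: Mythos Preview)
Your proof is correct and follows essentially the same approach as the paper: define $K=N\cap S(X,Y)$ as the stabilizer of $X$ in the induced action of $N$ on $\{X,Y\}$, use transitivity of the regular action to rule out $[N:K]=1$, and use freeness (orbit--stabilizer) to conclude $K\cdot 1=X$; the converse is the same orbit-counting argument you give. The paper's version is simply terser (``That $X=K\cdot 1$ is obvious''; ``The converse is similar''), and your only superfluous remark is pinning $X$ to the specific set $\{1,x,\dots,x^{n-1}\}$ --- the proposition and both proofs work for an arbitrary splitting with $1\in X$.
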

\begin{proof}
Assume that $1\in X$, and let $K=N\cap S(X,Y)=N_{X}$ the isotropy subgroup of $X$ with respect to the induced action of $N$ on $\{X,Y\}$. As such $[N:K]$=1 or 2, but $[N:K]=1$ is impossible as it would violate the transitivity (regularity) of $N$. That $X=K\cdot 1$ is obvious. The converse is similar, starting with the observation that for a given $K\leq N$ with $N$ regular and $[N:K]$=2, that $X=K\cdot 1$ and $Y=Z-X$ yield a splitting $\{X,Y\}$ with $N\leq W(X,Y)$. 
\end{proof}
The $K$'s which arise are of course normal, but we need the following fact about the normalizers of regular subgroups $N$.
\begin{proposition}
If $N\leq B$ is regular and $N\leq W(X,Y)$ corresponding to $K\leq N$ as above, then $Norm_{B}(N)\leq W(X,Y)$ if and only if $K$ is a characteristic subgroup of $N$.
\label{charK}
\end{proposition}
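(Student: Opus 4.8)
The plan is to use the regularity of $N$ to identify $Norm_B(N)$ with the holomorph $Hol(N)\cong N\rtimes Aut(N)$, and then simply track what conjugation does to the distinguished block $X=K\cdot 1$. Fix the base point $1\in Z$ used in \ref{I2K}, so that $n\mapsto n(1)$ is a bijection $N\to Z$; in particular $1\in X$, and for any subgroup $H\le N$ the orbit $H\cdot 1=\{h(1):h\in H\}$ \emph{determines} $H$, since $h(1)=h'(1)$ forces $h=h'$ by regularity. I will also use the elementary computation that if $\sigma\in Norm_B(N)$ fixes the point $1$, then for all $n\in N$ one has $\sigma(n(1))=(\sigma n\sigma^{-1})(\sigma(1))=c_\sigma(n)(1)$, where $c_\sigma\in Aut(N)$ denotes conjugation by $\sigma$; consequently $\sigma(X)=\sigma(K\cdot 1)=c_\sigma(K)\cdot 1$.

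First I would prove ``$K$ characteristic in $N$ $\Rightarrow Norm_B(N)\le W(X,Y)$''. Given $\sigma\in Norm_B(N)$, pick the unique $n_0\in N$ with $n_0(1)=\sigma(1)$ and set $\sigma'=n_0^{-1}\sigma$. Then $\sigma'$ fixes $1$ and still normalizes $N$, and since $n_0\in N\le W(X,Y)$ it is enough to check $\sigma'\in W(X,Y)$. By the computation above $\sigma'(X)=c_{\sigma'}(K)\cdot 1$, and $c_{\sigma'}(K)=K$ because $K$ is characteristic; hence $\sigma'(X)=K\cdot 1=X$ and therefore $\sigma'(Y)=Z\setminus X=Y$. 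By the characterization of $W(X,Y)$ as exactly those $w\in B$ with $w(X)=X,\,w(Y)=Y$ or $w(X)=Y,\,w(Y)=X$, we get $\sigma'\in W(X,Y)$, hence $\sigma\in W(X,Y)$.

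For the converse I would argue contrapositively. Suppose $K$ is not characteristic, and choose $\theta\in Aut(N)$ with $\theta(K)\ne K$. Let $\hat\theta\in B$ be the permutation of $Z$ given by $\hat\theta(n(1))=\theta(n)(1)$, which is well defined and bijective by regularity; a direct check gives $\hat\theta\, n\,\hat\theta^{-1}=\theta(n)$ for all $n\in N$, so $\hat\theta\in Norm_B(N)$, and $\hat\theta(1)=1$. Then $\hat\theta(X)=\theta(K)\cdot 1$. This set cannot be $X=K\cdot 1$, for otherwise the injectivity of $H\mapsto H\cdot 1$ would force $\theta(K)=K$; and it cannot be $Y$, since $1\in\theta(K)\cdot 1$ while $1\in X$ and $X\cap Y=\emptyset$. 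Thus $\hat\theta(X)\notin\{X,Y\}$, so $\hat\theta\notin W(X,Y)$ and $Norm_B(N)\not\le W(X,Y)$.

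The one point that needs care is the bookkeeping around the base point: the reduction to normalizing elements fixing $1$ (so that ``conjugation by $\sigma$'' becomes an honest automorphism of $N$ acting on $Z$ via the regular representation), together with the remark that a subgroup of a regular permutation group is recovered from its orbit of $1$ --- this last fact is precisely what eliminates both the $\theta(K)\cdot 1=X$ and the $\theta(K)\cdot 1=Y$ alternatives in the converse. Beyond that, everything rests on the $Norm_B(N)=Hol(N)\cong N\rtimes Aut(N)$ dictionary and the block characterization of $W(X,Y)$ already recorded, so I expect the final write-up to be short.
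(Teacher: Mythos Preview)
Your argument is correct and follows essentially the same route as the paper's: both decompose an element of $Norm_B(N)\cong Hol(N)$ as an element of $N$ times something fixing the base point (hence acting via an automorphism of $N$), and then track the image of $X=K\cdot 1$. The only cosmetic differences are that the paper transports the picture to $Perm(N)$ and writes the factorization as $\rho(m)\alpha$, and for the converse argues directly (an automorphic image $\alpha(K)$ is a subgroup containing the identity, hence equals $K$ rather than the coset $nK$) instead of contrapositively.
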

\begin{proof} The mapping $b: N\rightarrow G$ (given by $b(n)=n\cdot 1$) is a bijection inducing an isomorphism $\phi:Perm(G)\rightarrow Perm(N)$ and if $X=K\cdot 1$, then we may define $b(X)=Ke_{N}=K=\tilde X$, and similarly $\tilde Z=N$ and $\tilde Y=\tilde Z-\tilde X$. In $Perm(N)$ we have $\phi(K)=\lambda(K)\leq \lambda(N)=\phi(N)$ corresponding to $\tilde X=K$. Now, $Hol(N)=Norm_{Perm(N)}(N)=\rho(N)Aut(N)$ and so for $\eta\in Hol(N)$ we have $\eta=\rho(m)\alpha$ for $m\in N$ and $\alpha\in Aut(N)$ and so if $K$ is characteristic then
\begin{align*}
\eta(\tilde X)   &= \rho(m)\alpha(K) \\
	  &=\rho(m)\alpha(K) \\
	  &=\rho(m)(K)\\
	  &=Km^{-1}\\
          &=K\ or\ N-K\ (i.e.\ \tilde X\ or\ \tilde Y)\\
\end{align*}
For the converse observe that $N=K\cup nK$ (for some $n\not\in K$) and so for $\alpha\in Aut(N)\leq Norm_B(N)$ we have $\alpha(K)=K$ or $nK$ which, of course, means $\alpha(K)=K$, so, in fact, $Norm_B(N)\leq W(X,Y)$ implies $Aut(N)\leq S(X,Y)$. 
\end{proof}
Of course, $K$ being characteristic in $N$ corresponds to $K\triangleleft Norm_B(N)$ and since $Norm_B(N)$ acts transitively on $X\cup Y$ then this is yet another instance of a normal block system, where the blocks are the same as those arising from $K\triangleleft N$. And for $\lambda(D_n)$ the block structure is as follows.
\begin{proposition}
\label{GammaWreath}
Given $G=D_n$ as presented above, then:\par
(a) If $n$ is odd, then $\lambda(G)\leq W(X_0,Y_0)$ for exactly one $\{X_0,Y_0\}$.\par
(b) If $n$ is even then $\lambda(G)\leq W(X_i,Y_i)$ for exactly three $\{X_i,Y_i\}$\par
\end{proposition}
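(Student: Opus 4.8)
The plan is to convert the statement into a count of the index-$2$ subgroups of $D_n$ via Proposition~\ref{I2K}, and then to read that count off the abelianization of $D_n$.

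First I would note that $\lambda(G)$ is regular (the left regular representation always is), so Proposition~\ref{I2K} applies to $N=\lambda(G)$. Given any splitting $\{X,Y\}$ of $Z=G$, exactly one of $X,Y$ contains the identity $1$; call it $X$. Then $\lambda(G)\le W(X,Y)$ forces $\lambda(G)$ to possess an index-$2$ subgroup $K$ with $X=K\cdot 1$, and conversely each index-$2$ subgroup $K$ of $\lambda(G)$ produces such a splitting with $X=K\cdot 1$, $Y=Z-X$. Identifying $\lambda(G)$ with $G$ via $\lambda(g)\mapsto\lambda(g)(1)=g$, the subset $K\cdot 1\subseteq Z$ is simply the underlying set of the subgroup $K\le G$; since $1\in K$ for every subgroup, the block of the splitting containing $1$ is exactly $K$, so $K$ is recovered from $\{X,Y\}$. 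Hence $K\mapsto\{K,\,G-K\}$ is a bijection between the index-$2$ subgroups of $D_n$ and the splittings $\{X,Y\}$ with $\lambda(G)\le W(X,Y)$, and it remains only to count the former.

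Second, an index-$2$ subgroup of a finite group is normal and is the kernel of a surjection onto $\mathbb{Z}/2\mathbb{Z}$, so such subgroups correspond bijectively to the index-$2$ subgroups of the abelianization $D_n/[D_n,D_n]$. The relation $xt=tx^{-1}$ gives $[x,t]=x^{-2}$, so $\langle x^2\rangle\le[D_n,D_n]$, while $D_n/\langle x^2\rangle$ is abelian; hence $[D_n,D_n]=\langle x^2\rangle$. Therefore $D_n/[D_n,D_n]\cong\mathbb{Z}/2\mathbb{Z}$ when $n$ is odd (as then $\langle x^2\rangle=\langle x\rangle$) and $D_n/[D_n,D_n]\cong(\mathbb{Z}/2\mathbb{Z})^2$ when $n$ is even. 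Since an elementary abelian $2$-group of rank $k$ has $2^{k}-1$ subgroups of index $2$, $D_n$ has exactly $1$ index-$2$ subgroup when $n$ is odd and exactly $3$ when $n$ is even, which gives (a) and (b).

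Finally I would record the splittings explicitly, since the notation $\{X_i,Y_i\}$ is used below. In every case the distinguished splitting comes from $K_0=\langle x\rangle$, namely $X_0=\{1,x,\dots,x^{n-1}\}$ and $Y_0=\{t,tx,\dots,tx^{n-1}\}$, which is the splitting $\{X,Y\}$ we began with. When $n$ is even the other two come from the index-$2$ subgroups $K_1=\langle x^2,t\rangle$ and $K_2=\langle x^2,tx\rangle$ (each of order $n$, using $(tx)^2=1$), yielding $X_1=\{x^{2j}:0\le j<n/2\}\cup\{tx^{2j}:0\le j<n/2\}$ and $X_2=\{x^{2j}:0\le j<n/2\}\cup\{tx^{2j+1}:0\le j<n/2\}$ with $Y_i=Z-X_i$; that $K_0,K_1,K_2$ are pairwise distinct (hence account for all three) is immediate from which of $x,t$ they contain. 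There is no genuinely hard step here; the one point requiring care is the bijectivity in the second paragraph — one must verify both that Proposition~\ref{I2K} produces \emph{every} admissible splitting and that distinct index-$2$ subgroups yield distinct splittings, the latter because membership of $1$ pins down which block is $X$.
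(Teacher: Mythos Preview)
Your argument is correct, but it runs in the opposite logical direction from the paper's. The paper proves this proposition first, by direct analysis: for $n$ odd it (like you) simply cites that $\langle x\rangle$ is the unique index-$2$ subgroup, but for $n$ even it exhibits $\{X_1,Y_1\}$ and $\{X_2,Y_2\}$ and then does a short case split on whether $\lambda(x)$ and $\lambda(t)$ fix or swap the blocks to show no further splitting can occur. Only \emph{afterwards} does the paper deduce, as a corollary via Proposition~\ref{I2K}, that $D_n$ has exactly one or three index-$2$ subgroups. You instead establish the subgroup count independently (via the abelianization $D_n/\langle x^2\rangle$) and then push it through the bijection of Proposition~\ref{I2K} to get the splittings; this is more uniform and avoids the ad~hoc case analysis, at the cost of importing a small piece of group theory. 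Both routes are short; yours has the advantage of handling odd and even $n$ in a single stroke, while the paper's hands-on argument makes the block structure visible directly from the cycle shapes of $\lambda(x)$ and $\lambda(t)$. (A cosmetic note: your $K_2=\langle x^2,tx\rangle$ agrees with the paper's $\langle x^2,tx^{n-1}\rangle$ since $tx^{n-1}=tx\cdot x^{n-2}$ with $x^{n-2}\in\langle x^2\rangle$ for $n$ even.)
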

\begin{proof}
The underlying set is $\{1,x,\dots,x^{n-1},t,tx,\dots,tx^{n-1}\}$ and 
$$\lx = (1\ x\ \cdots x^{n-1})(t\ tx^{n-1}\ \dots tx)$$
and
$$\lt=(1\ t)(x\ tx)\cdots (x^{n-1}\ tx^{n-1})$$
\noindent For $n$ odd, the claim is that there is exactly one block of size $n$ (equivalently only one splitting yielding a wreath product containing $G$), namely
$$
X_0=\{1,x,\dots,x^{n-1}\}\ and\ Y_0=\{t,tx,\dots,tx^{n-1}\}
$$
That $\lambda(G)\leq W(X_0,Y_0)$ is clear. 
That this is the only possibility follows from the observation that $K=\langle\lx\rangle$ is the only index 2 subgroup of $D_n$.\par
\noindent For $n$ even, we have the following two additional splittings:
\begin{align*}
X_1&=\{1,x^2,\dots,x^{n-2},t,tx^2,\dots,tx^{n-2}\}\\
Y_1&=\{x,x^3,\dots,x^{n-1},tx,tx^3,\dots,tx^{n-1}\}
\end{align*}
and
\begin{align*}
X_2&=\{1,x^2,\dots,x^{n-2},tx,tx^3,\dots,tx^{n-1}\}\\
Y_2&=\{x,x^3,\dots,x^{n-1},t,tx^2,\dots,tx^{n-2}\}
\end{align*}
and that $G$ is contained in the associated wreath products can be verified by considering the actions of $\lx$ and $\lt$. If now $\lambda(G)\leq W(X,Y)$ for some {\it other} splitting $\{X,Y\}$ then assume $1\in X$. If $\lx(X)=X$ then we have $X=X_0$. If $\lx(X)=Y$ then $\lx(Y)=X$ and so $\{1,x^2,\dots,x^{n-2}\}\subseteq X$ and $\{x,x^3,\dots,x^{n-1}\}\subseteq Y$. If $\lt(X)=X$ then $X$ contains $\{t,tx^2,\dots,tx^{n-2}\}$ and so $X=X_1$. If $\lt(X)=Y$ then $Y$ contains $\{tx,tx^3,\dots,tx^{n-1}\}$ and therefore $Y=Y_2$.\par
\end{proof}
Of course, the splittings $\{X_i,Y_i\}$ arise from \ref{I2K} by virtue of the following.
\begin{corollary}
For $n$ odd, $D_n$ has exactly one subgroup of index 2 (hence characteristic), and for $n$ even, 3 subgroups of index 2.
\end{corollary}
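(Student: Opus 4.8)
The plan is to count the index-$2$ subgroups of $D_n$ directly, using the characteristic subgroup $C=\langle x\rangle$ of Proposition \ref{AutGamma}(b) as a pivot. First I would note that any subgroup $H$ of index $2$ is normal and that $C$ itself has index $2$ in $D_n$, so $H\cap C$ has index $1$ or $2$ in $C$. If $[C:H\cap C]=1$ then $C\subseteq H$, and comparing orders forces $H=C$; this accounts for the subgroup $\langle x\rangle$ in every case, and by Proposition \ref{AutGamma}(b) it is characteristic.

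Next I would treat the remaining case $[C:H\cap C]=2$. Since $C$ is cyclic of order $n$, it possesses an index-$2$ subgroup precisely when $n$ is even, and then $\langle x^2\rangle$ is the unique such subgroup. Hence for $n$ odd no further $H$ can occur, so $\langle x\rangle$ is the only index-$2$ subgroup, settling the odd case. For $n$ even, write $H=\langle x^2\rangle\cup\langle x^2\rangle w$ with $w\notin C$; every element outside $C$ has the form $tx^{j}$, and the coset $\langle x^2\rangle tx^{j}=\{tx^{j},tx^{j+2},\dots,tx^{j+n-2}\}$ depends only on the parity of $j$, so there are exactly two possibilities, $\langle x^2,t\rangle$ and $\langle x^2,tx\rangle$. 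The one-line relation $tx^{2}t=x^{-2}\in\langle x^2\rangle$ confirms both of these are genuine subgroups of order $n$. Combining the two cases yields exactly one index-$2$ subgroup when $n$ is odd and exactly three when $n$ is even.

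I would close by remarking that this recovers precisely the splittings of Proposition \ref{GammaWreath} via Proposition \ref{I2K}, since $\langle x\rangle\cdot 1=X_0$, $\langle x^2,t\rangle\cdot 1=X_1$, and $\langle x^2,tx\rangle\cdot 1=X_2$; indeed the corollary is essentially a restatement of the internal computation in the proof of \ref{GammaWreath}. Alternatively one could argue homologically: index-$2$ subgroups correspond to nonzero elements of $\mathrm{Hom}(D_n^{\mathrm{ab}},\mathbb{Z}/2)$, and since $[D_n,D_n]=\langle x^2\rangle$ one has $D_n^{\mathrm{ab}}\cong\mathbb{Z}/2$ for $n$ odd and $D_n^{\mathrm{ab}}\cong\mathbb{Z}/2\times\mathbb{Z}/2$ for $n$ even, giving the counts $1$ and $3$. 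There is no serious obstacle here; the only point needing care is that the index-$2$ subgroup of the cyclic group $C$ exists only in the even case, which is exactly what makes the count bifurcate.
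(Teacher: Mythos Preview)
Your proof is correct. The route, however, is the reverse of the paper's. The paper deduces the corollary \emph{from} Propositions~\ref{I2K} and~\ref{GammaWreath}: since index-$2$ subgroups of $\lambda(D_n)$ correspond bijectively (via $K\mapsto K\cdot 1$) to splittings $\{X,Y\}$ with $\lambda(D_n)\leq W(X,Y)$, and since \ref{GammaWreath} already counted those splittings (one for $n$ odd, three for $n$ even), the subgroup count follows immediately; the explicit $K_i$ are then just read off from the $X_i$. You instead give a self-contained group-theoretic argument (intersecting with $C=\langle x\rangle$, or alternatively computing $D_n^{\mathrm{ab}}$) and only afterwards observe the match with \ref{GammaWreath}. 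Your approach has the virtue of being independent of the permutation machinery and would stand on its own outside this paper; the paper's approach has the virtue of making the corollary a genuine corollary of what precedes it, with no new computation. Note also that your $\langle x^2,tx\rangle$ coincides with the paper's $K_2=\langle x^2,tx^{n-1}\rangle$, since $n-1$ is odd.
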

\begin{proof}
We observe for $n$ odd that $K_0=\langle x\rangle$ is an index two subgroup of $D_n$ which by \ref{I2K} and \ref{GammaWreath} is unique and therefore characteristic. For $n$ even, we know by \ref{I2K} and \ref{GammaWreath} that there will be exactly three index 2 subgroups, and we can exhibit them easily, namely $K_0=\langle x\rangle$, $K_1=\langle x^2,t\rangle$ and $K_2=\langle x^2,tx^{n-1}\rangle$.
\end{proof}
\begin{corollary}
\label{uniqueHolinW}
For all $n$, $Hol(D_n)$ is a subgroup of $W(X_0,Y_0)$ for a unique $\{X_0,Y_0\}$.
\end{corollary}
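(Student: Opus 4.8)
The plan is to reduce the statement to the claim that $D_n$ has a unique \emph{characteristic} subgroup of index $2$, and then to settle that claim using Proposition~\ref{AutGamma}(b) together with the corollary immediately preceding this one. Since $Hol(D_n)=Norm_B(\lambda(D_n))$ contains $\lambda(D_n)$, any splitting $\{X,Y\}$ with $Hol(D_n)\leq W(X,Y)$ automatically satisfies $\lambda(D_n)\leq W(X,Y)$; relabelling so that $1\in X$ (which changes neither the unordered pair $\{X,Y\}$ nor the group $W(X,Y)$), Proposition~\ref{I2K} shows that $X=K\cdot 1$ for a uniquely determined index-$2$ subgroup $K\leq \lambda(D_n)$, and conversely every such $K$ arises in this way. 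Proposition~\ref{charK}, applied to the regular subgroup $N=\lambda(D_n)$, then says that $Norm_B(\lambda(D_n))\leq W(X,Y)$ holds if and only if that $K$ is characteristic in $\lambda(D_n)$, equivalently if and only if the corresponding index-$2$ subgroup of $D_n$ is characteristic. Thus the splittings $\{X,Y\}$ with $Hol(D_n)\leq W(X,Y)$ are in bijection with the characteristic index-$2$ subgroups of $D_n$, and the whole statement comes down to counting the latter.

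To count them I would use the corollary just proved. When $n$ is odd, $D_n$ has only the one index-$2$ subgroup $\langle x\rangle$, so there is nothing further to check. When $n$ is even there are exactly three, $K_0=\langle x\rangle$, $K_1=\langle x^2,t\rangle$, $K_2=\langle x^2,tx^{n-1}\rangle$; here $K_0$ is characteristic by Proposition~\ref{AutGamma}(b), and I would eliminate $K_1$ and $K_2$ by exhibiting an automorphism interchanging them. The natural candidate is $\phi_{1,1}$ in the notation of Proposition~\ref{AutGamma}: it fixes $\langle x\rangle$ pointwise and sends $t\mapsto tx$, so $\phi_{1,1}(K_1)=\langle x^2,tx\rangle$, and since $tx^{n-1}\cdot x^2=tx$ one has $\langle x^2,tx\rangle=\langle x^2,tx^{n-1}\rangle=K_2$; as $\phi_{1,1}$ also fixes $K_0$, this shows $K_1$ and $K_2$ are not characteristic. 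Hence in every case $\langle x\rangle$ is the unique characteristic subgroup of $D_n$ of index $2$.

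Finally I would assemble the two halves. For existence, apply Proposition~\ref{charK} to $N=\lambda(D_n)$ with $K=\lambda(\langle x\rangle)$ — which is characteristic in $\lambda(D_n)$ because $\langle x\rangle$ is characteristic in $D_n$ — and with the splitting it determines, namely $X_0=\langle x\rangle\cdot 1=\{1,x,\dots,x^{n-1}\}$ and $Y_0=Z-X_0$ (the same $\{X_0,Y_0\}$ appearing in Proposition~\ref{GammaWreath}); this yields $Hol(D_n)=Norm_B(\lambda(D_n))\leq W(X_0,Y_0)$. For uniqueness, any splitting $\{X,Y\}$ with $Hol(D_n)\leq W(X,Y)$ must, by the reduction in the first paragraph, have its associated index-$2$ subgroup characteristic, hence equal to $\langle x\rangle$, hence $\{X,Y\}=\{X_0,Y_0\}$. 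I do not expect a genuine obstacle; the only spots warranting care are the bookkeeping that makes the splitting attached to $K$ independent of whether $1$ lands in $X$ or in $Y$, and the explicit coset verification — needed only when $n$ is even — that $\phi_{1,1}$ really swaps $K_1$ with $K_2$ rather than stabilizing each, which is why I would write that computation out in full.
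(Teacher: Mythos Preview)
Your argument is correct and follows the same route as the paper: reduce via Propositions~\ref{I2K} and~\ref{charK} to showing that $\langle x\rangle$ is the unique characteristic index-$2$ subgroup of $D_n$, which is immediate for $n$ odd and, for $n$ even, is handled by exhibiting an automorphism swapping $K_1$ and $K_2$. The paper leaves that last step implicit (``given our knowledge of $Aut(D_n)$''), whereas you spell it out with $\phi_{1,1}$; this is the same automorphism the paper uses later in Lemma~\ref{autblock}.
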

\begin{proof}
For $n$ odd, the statement is true by the above corollary together with \ref{charK}. Given our knowledge of $Aut(D_n)$ we can show for $n$ even, that, of the index two subgroups $K_0$, $K_1$ and $K_2$, only $K_0=\langle x\rangle$ is characteristic.
\end{proof}
As to the block structures for any $N\in R(D_n,[D_n])$ we utilize the fact that $\lambda(D_n)$ normalizes any such $N$.
\begin{theorem}
\label{Nwreath}
Let $N\in R(D_n,[D_n])$ with $K$ the characteristic index 2 subgroup of $N$ and $X=K\cdot 1$ (with $Y=Z-X$).\par\vskip0.125in
(a) If $n$ is odd then $X=X_0$.\par\vskip0.125in
(b) If $n$ is even then $X=X_i$ for either $i=0$, $1$, or $2$.
\end{theorem}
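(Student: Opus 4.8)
The plan is to obtain Theorem~\ref{Nwreath} as a direct assembly of Propositions~\ref{I2K}, \ref{charK} and \ref{GammaWreath}, with essentially no new computation. First I would record the abstract input: since $N\in R(D_n,[D_n])$ we have $N\cong D_n$ with $n\geq 3$, and by Proposition~\ref{AutGamma}(b) (applied through any isomorphism $D_n\to N$) together with the corollaries following Proposition~\ref{GammaWreath}, the group $N$ has a \emph{unique} characteristic subgroup of index $2$, namely the image of the cyclic part $\langle x\rangle$; this is the $K$ named in the statement. Put $X=K\cdot 1$ and $Y=Z-X$. Since $K$ contains the identity permutation, $1\in X$, and $\{X,Y\}$ is a splitting of $Z$.

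Next I would feed this into the block machinery. Proposition~\ref{I2K} applies verbatim: a regular $N$ possessing an index~$2$ subgroup $K$ with $X=K\cdot 1$ satisfies $N\leq W(X,Y)$. Because $K$ is moreover \emph{characteristic} in $N$, Proposition~\ref{charK} strengthens this to $Norm_B(N)\leq W(X,Y)$. Finally, $N\in R(D_n,[D_n])$ means exactly that $\lambda(G)\leq Norm_B(N)$, so we conclude $\lambda(G)\leq W(X,Y)$ for this particular splitting $\{X,Y\}$ determined by $K$.

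The last step is to match $\{X,Y\}$ against the list produced by Proposition~\ref{GammaWreath}, which enumerates precisely those splittings $\{X,Y\}$ with $\lambda(G)\leq W(X,Y)$: only $\{X_0,Y_0\}$ when $n$ is odd, and only $\{X_0,Y_0\},\{X_1,Y_1\},\{X_2,Y_2\}$ when $n$ is even. Since each $X_i$ contains $1$ while each $Y_i$ does not, and our $X$ contains $1$, we are forced to have $X=X_0$ in the odd case and $X=X_i$ for some $i\in\{0,1,2\}$ in the even case — exactly assertions (a) and (b).

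I do not expect a genuine obstacle: the result is a corollary of the three cited propositions. The only points requiring care are (i) verifying that the characteristic index~$2$ subgroup of $N$ is the cyclic one, which rests on $\langle x\rangle$ being the sole characteristic subgroup of index $2$ in $D_n$ for $n\geq 3$ (already established in the corollaries to Proposition~\ref{GammaWreath}), and (ii) keeping track of the base point so that it is $X$, not $Y$, that contains $1$. The former is the only spot where any actual group theory is invoked.
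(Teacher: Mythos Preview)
Your proposal is correct and follows essentially the same approach as the paper: both arguments run the chain $\lambda(G)\leq Norm_B(N)\leq W(X,Y)$ (the latter inclusion coming from Proposition~\ref{charK}) and then invoke Proposition~\ref{GammaWreath} to identify $\{X,Y\}$ with one of the listed splittings. Your version is simply more explicit about the citations and about the base-point check $1\in X$, which the paper leaves implicit.
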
\par\vskip0.125in
\begin{proof}
Part (a) is a consequence of the fact that $\lambda(G)\leq W(X_0,Y_0)$ uniquely so that $\lambda(G)\leq Norm_B(N)\leq W(X,Y)$ implies $X=X_0$.\par\vskip0.125in
Part (b) is a consequence of the fact that $Norm_B(N)\leq W(X,Y)$ and $\lambda(G)\leq W(X_i,Y_i)$ for $i=0,1,2$ so that $X$ must be $X_i$ for {\it exactly one} such $i$.
\end{proof}
\par\vskip0.125in
\noindent The splitting corresponding to the index 2 characteristic subgroup $K$ of any $N\in R(D_n,[D_n])$ is sufficient to actually determine $N$ itself. To see this, we start by considering the subgroup $K_0=\langle\lambda(x)\rangle\leq\lambda(D_n)$
\begin{proposition}\cite[2.6]{Kohl2015}
\label{strongnorm}
Given $G=D_n$ as presented above, with $C=\lambda(\langle x\rangle)$, (i.e. $K_0$ above) one has $Norm_B(C)=Norm_B(\lambda(G))=Hol(G)$.
\end{proposition}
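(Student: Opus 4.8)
The plan is to sandwich $Norm_B(C)$ between $Hol(G)$ and a group of the same order. Since $C = \lambda(\langle x\rangle)$ is the image under $\lambda$ of the characteristic subgroup $\langle x\rangle$ of $D_n$ (Proposition \ref{AutGamma}(b)), every permutation normalizing $\lambda(G)$ also normalizes $C$, so $Hol(G) = Norm_B(\lambda(G)) \subseteq Norm_B(C)$; the second equality $Norm_B(\lambda(G)) = Hol(G)$ is just the definition of $Hol$ applied to $\lambda(G) \in R(G)$. It therefore suffices to prove $|Norm_B(C)| = |Hol(G)|$. By Proposition \ref{AutGamma}(a) we have $|Aut(D_n)| = n|U_n|$, hence $|Hol(G)| = |D_n|\cdot|Aut(D_n)| = 2n^2|U_n|$, and I would aim to show $|Norm_B(C)| = 2n^2|U_n|$ exactly.

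First I would confine $Norm_B(C)$ to a wreath product. The generator $\lambda(x)$ is the product of the two disjoint $n$-cycles $(1\ x\ \cdots\ x^{n-1})$ and $(t\ tx^{n-1}\ \cdots\ tx)$, so the orbits of $C = \langle\lambda(x)\rangle$ on $Z$ are precisely $X_0$ and $Y_0$. Any $\sigma \in Norm_B(C)$ permutes these two orbits, and since $|X_0| = |Y_0| = n$ this forces $\sigma(X_0) \in \{X_0, Y_0\}$; hence $Norm_B(C) \leq W(X_0,Y_0)$. Next I would count the orbit-preserving part $H = Norm_B(C) \cap S(X_0,Y_0)$. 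Writing $\sigma = (\alpha,\beta)$ with $\alpha \in B_{X_0}$, $\beta \in B_{Y_0}$, and letting $c_X$, $c_Y$ denote the restrictions of $\lambda(x)$ to $X_0$, $Y_0$, the condition $\sigma C\sigma^{-1} = C$ is equivalent to $\sigma\lambda(x)\sigma^{-1} \in C$, i.e. to the existence of a \emph{single} exponent $k$ with $\alpha c_X\alpha^{-1} = c_X^{\,k}$ and $\beta c_Y\beta^{-1} = c_Y^{\,k}$; comparing cycle types forces $k \in U_n$. For each fixed $k \in U_n$ the admissible $\alpha$ form a coset of the centralizer of $c_X$ in $B_{X_0}$, namely $\langle c_X\rangle$ of order $n$, and this coset is nonempty because $c_X^{\,k}$, being an $n$-cycle, is conjugate to $c_X$ in $B_{X_0}$; likewise for $\beta$. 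Hence $|H| = \sum_{k \in U_n} n\cdot n = n^2|U_n|$. Finally, $Norm_B(C)$ contains an element interchanging $X_0$ and $Y_0$, namely $\lambda(t) = \tau \in \lambda(G) \subseteq Hol(G) \subseteq Norm_B(C)$, so the ``swap'' coset of $H$ in $Norm_B(C)$ is nonempty and $|Norm_B(C)| = 2|H| = 2n^2|U_n| = |Hol(G)|$. Combined with the inclusion noted above, this gives $Norm_B(C) = Hol(G) = Norm_B(\lambda(G))$.

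I expect the crux to be the step identifying $H$: one must argue carefully that the two conjugation exponents are forced to agree (so that $(\alpha,\beta)$ conjugates the \emph{element} $\lambda(x)$, not merely its two cyclic ``halves'', back into $C$), and conversely that every such matched pair genuinely normalizes $C$. The orbit-permutation reduction to $W(X_0,Y_0)$ and the coset bookkeeping for the centralizer of an $n$-cycle are then routine.
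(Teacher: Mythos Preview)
Your argument is correct. The sandwich $Hol(G)\subseteq Norm_B(C)$ via characteristicity of $\langle x\rangle$, the confinement $Norm_B(C)\leq W(X_0,Y_0)$ via the orbit decomposition of $C$, the count $|H|=n^2|U_n|$ by matching conjugation exponents on the two $n$-cycles, and the index-$2$ step using $\lambda(t)$ all go through as you describe. The only point worth stating a touch more explicitly is the equivalence ``$\sigma C\sigma^{-1}=C\Longleftrightarrow \sigma\lambda(x)\sigma^{-1}\in C$'': this holds because any conjugate of $\lambda(x)$ has order $n$, so if it lands in the cyclic group $C$ of order $n$ it must generate $C$.

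As for comparison with the paper: there is nothing to compare against. The paper does not supply a proof of this proposition; it merely quotes it as \cite[2.6]{Kohl2015}. Your write-up therefore provides a self-contained justification that the paper omits, and the orbit/centralizer count you use is exactly the kind of direct argument one would expect for a statement of this shape.
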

\noindent Now that we have determined those wreath products that contain $\lambda(D_n)$, we wish to now consider those wreath products that contain a given regular subgroup $N$, and then determine when $\lG\leq W(X,Y)$ and $\lG\leq Norm_B(N)\leq W(X',Y')$ implies $\{X,Y\}=\{X',Y'\}$.  Moreover, we need not worry about the order $2$ generator of $N$.
\begin{proposition}
If $k_Xk_Y$ is product of two disjoint $n$-cycles, then $K=\langle k_Xk_Y\rangle$ is the index 2 characteristic subgroup of exactly one regular subgroup $N\leq B$ where $N\cong D_n$.
\end{proposition}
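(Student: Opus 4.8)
Write $c=k_Xk_Y$, let $X$ and $Y$ be the supports of $k_X$ and $k_Y$, and set $K=\langle c\rangle$. Since $k_X$ and $k_Y$ are disjoint $n$-cycles acting on a set of size $2n$, the pair $\{X,Y\}$ is a splitting of $Z$, the element $c$ has order $n$, and $X,Y$ are precisely the orbits of $K$ on $Z$; in particular the splitting $\{X,Y\}$ is already determined by $K$ alone. The plan is to show that the requirements that $N$ be regular, $N\cong D_n$, and $K$ have index $2$ in $N$ already force the coset $N\setminus K$ to be one specific subset of $B$, and then to verify that the resulting group $N$ is indeed regular, dihedral, and has $K$ as a characteristic subgroup (so the characteristic hypothesis turns out to be automatic here).

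First I would carry out the reduction. If $N$ is as in the statement, then $K\triangleleft N$ with $[N:K]=2$, so the $K$-orbits $X,Y$ are blocks permuted by $N$; hence $N\leq W(X,Y)$, since $W(X,Y)$ is exactly the stabilizer in $B$ of the splitting $\{X,Y\}$. As $n\geq 3$ and $K$ is cyclic of order $n$, $K$ is the unique cyclic subgroup of order $n$ in $N\cong D_n$; so $K$ is the rotation subgroup, it is characteristic in $N$, and every $t'\in N\setminus K$ is an involution satisfying $t'ct'=c^{-1}$. Finally, since $N$ is transitive on $Z$ while $K$ is not, $N$ is not contained in $S(X,Y)$, so every element of $N\setminus K$ interchanges $X$ and $Y$.

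It then remains to count the possible cosets $N\setminus K$ and to exhibit one valid $N$. Fix $x_0\in X$ and $y_0\in Y$ and use the free $K$-action to identify $Z$ with $\mathbb{Z}_n\sqcup\mathbb{Z}_n$ via $c^i(x_0)\leftrightarrow(i,0)$ and $c^i(y_0)\leftrightarrow(i,1)$, so that $c$ acts by $(i,\varepsilon)\mapsto(i+1,\varepsilon)$. For a prospective $t'$ as above, writing $t'(x_0)=(a,1)$, the relation $t'c^i=c^{-i}t'$ forces $t'(i,0)=(a-i,1)$ for every $i$, and then $(t')^2=1$ forces $t'(i,1)=(a-i,0)$; thus $t'$ is completely determined by the single parameter $a\in\mathbb{Z}_n$, yielding exactly $n$ candidate involutions $t_a$. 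A one-line computation gives $t_at_b=c^{a-b}\in K$, so $Kt_a=Kt_b$ for all $a,b$, and therefore $N=K\cup Kt_a$ is the same subgroup for every $a$; this is the desired uniqueness. For existence, take $N=\langle c,t_0\rangle$: it has order $2n$ and satisfies the defining relations of $D_n$, so $N\cong D_n$; each nonidentity element of $K$ is fixed-point-free, and each $c^jt_0$ interchanges $X$ and $Y$ and so is fixed-point-free as well, whence $N$ is regular; and $K$ is the rotation subgroup of $N$, hence characteristic. The only real bookkeeping is the parametrization step --- checking that a single function value propagates through the two relations to pin down $t'$ everywhere, and that altering that value merely translates $t'$ by an element of $K$ --- and both facts are transparent in the $K$-coordinatized picture, so I do not expect a genuine obstacle there.
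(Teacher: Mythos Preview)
Your argument is correct and follows essentially the same route as the paper: both show that any $t'\in N\setminus K$ must interchange $X$ and $Y$ and invert $c$, then enumerate the $n$ candidate involutions and observe they all lie in a single $K$-coset, hence determine a unique $N$. Your presentation is a bit more abstract (coordinatizing via the free $K$-action rather than listing the $n$ explicit transposition products) and you are more careful about verifying existence and regularity of the resulting $N$, but the underlying idea is the same.
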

\begin{proof}
Since $k_Xk_Y$ is a product of two disjoint $n$-cycles where $X=Supp(k_X)$ and $Y=Supp(k_Y)$ then if $N=K\langle\tau\rangle$ we claim that $\tau(X)=Y$ and $\tau(Y)=X$. Since $\tau$ has order 2, it must be a product of $n$ disjoint transpositions by regularity. If $n$ is odd then $\tau(X)=X$ and $\tau(Y)=Y$ is clearly impossible since one of the transpositions would have to contain an element of $X$ and one from $Y$ which would contradict $\tau(X)=X$. If $n$ is even then one {\it could} have $n/2$ transpositions with elements from $X$ and $n/2$ transpositions with elements from $Y$, but what would happen is that the resulting group $\langle k_xk_y,\tau\rangle$ would have fixed points and would therefore not be regular. As such $\tau$ is a product of disjoint transpositions where each transposition contains one element from $X$ and one from $Y$, and where $\tau k_x\tau^{-1}=k_Y^{-1}$ and $\tau k_Y \tau^{-1}=k_x^{-1}$. Specifically if $k_X=(z_1,z_2,\dots,z_n)$ and $k_Y=(z_1',z_2',\dots z_n')$ (whence $k_Y^{-1}=(z_n',z_{n-1}',\dots,z_2',z_1')$) then the only possibilities for $\tau$ are
\begin{align*}
&(z_1,z_n')(z_2,z_{n-1}')(z_3,z_{n-2}')\cdots (z_n,z_1')\\
&(z_1,z_{n-1}')(z_2,z_{n-2}')(z_3,z_{n-3}')\cdots (z_n,z_{n}')\\
&(z_1,z_{n-2}')(z_2,z_{n-3}')(z_3,z_{n-4}')\cdots (z_n,z_{n-1}')\\
&\vdots\\
&(z_1,z_1')(z_2,z_{n}')(z_3,z_{n-1}')\cdots (z_n,z_2')\\
\end{align*}
where each (together with $k_Xk_y$) generate the same group isomorphic to $D_n$.
\end{proof}

\section{Enumerating $K$}
The enumeration of $N\in R(D_n,[D_n])$ is equivalent to the characterization of $K\leq N$ the (cyclic) characteristic subgroup of index 2.\par\vskip0.125in
We divide the analysis between the case where $n$ is odd, versus when $n$ is even. The biggest difference is that when $n$ is odd, any $N\in R(D_n,[D_n])$ must satisfy $N\leq W(X_0,Y_0)$ whereas if $n$ is even, then one may have $N\leq W(X_i,Y_i)$ for $i=0,1,2$ potentially. As it is integral to the determination of $|R(G,[G])|$ we examine the notion of the multiple holomorph of a group, as formulated in \cite{Kohl2015}\par
\begin{proposition}
The collection
$$
\mathcal{H}(G)=\{\text{ regular }N\leq Hol(G)\ |\ N\cong G\text{ and }Hol(N)=Hol(G)\}
$$
is exactly parameterized by $\tau\in T(G)=Norm_B(Hol(G))/Hol(G)$ the multiple holomorph of $G$, namely $\mathcal{H}(G)=\{\tau\lambda(G)\tau^{-1}\ |\tau\in T(G)\}$.
\end{proposition}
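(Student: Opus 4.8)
The plan is to realize the asserted parameterization as an explicit bijection $\tau Hol(G)\mapsto\tau\lambda(G)\tau^{-1}$ from $T(G)=Norm_B(Hol(G))/Hol(G)$ onto $\mathcal{H}(G)$, using only two facts already in hand: $Norm_B(\lambda(G))=Hol(G)$ (Proposition \ref{strongnorm} in the present setting, and the standard fact in general) and $Hol(N)=Norm_B(N)$ for every regular $N\leq B$. First I would check this assignment is well defined with image inside $\mathcal{H}(G)$: for $\tau\in Norm_B(Hol(G))$ the group $N:=\tau\lambda(G)\tau^{-1}$ is regular and abstractly isomorphic to $G$, it lies in $\tau Hol(G)\tau^{-1}=Hol(G)$, and
$$Hol(N)=Norm_B(N)=\tau\,Norm_B(\lambda(G))\,\tau^{-1}=\tau Hol(G)\tau^{-1}=Hol(G),$$
so $N\in\mathcal{H}(G)$; moreover the value depends only on the coset $\tau Hol(G)$, since any $h\in Hol(G)=Norm_B(\lambda(G))$ fixes $\lambda(G)$ under conjugation. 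The same computation gives injectivity: $\tau_1\lambda(G)\tau_1^{-1}=\tau_2\lambda(G)\tau_2^{-1}$ forces $\tau_2^{-1}\tau_1\in Norm_B(\lambda(G))=Hol(G)$, i.e.\ $\tau_1 Hol(G)=\tau_2 Hol(G)$.

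The substantive step is surjectivity. Given $N\in\mathcal{H}(G)$, the groups $N$ and $\lambda(G)$ are both regular subgroups of $B=Perm(G)$ and are abstractly isomorphic, hence conjugate in $B$: fixing a base point of the underlying set $Z$, the orbit maps $n\mapsto n\cdot 1$ identify $N$ and $\lambda(G)$ bijectively with $Z$, and conjugation by the permutation obtained by transporting an isomorphism $\lambda(G)\to N$ through these identifications carries $\lambda(G)$ onto $N$. Fix such a $\sigma\in B$ with $N=\sigma\lambda(G)\sigma^{-1}$. Then $Hol(N)=Norm_B(N)=\sigma\,Norm_B(\lambda(G))\,\sigma^{-1}=\sigma Hol(G)\sigma^{-1}$, so the hypothesis $Hol(N)=Hol(G)$ says exactly that $\sigma\in Norm_B(Hol(G))$. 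Hence $N$ is the image of $\sigma Hol(G)\in T(G)$, which together with the previous paragraph yields the bijection and the set equality $\mathcal{H}(G)=\{\tau\lambda(G)\tau^{-1}\mid\tau\in T(G)\}$. (One sees a posteriori that the requirement $N\leq Hol(G)$ in the definition of $\mathcal{H}(G)$ is redundant given the other two conditions, since every regular $N$ satisfies $N\leq Norm_B(N)=Hol(N)$.)

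I expect the only non-formal ingredient to be the fact that isomorphic regular subgroups of a symmetric group are conjugate within it, so the main obstacle is simply to produce that conjugating element $\sigma$ cleanly. Once it exists, the hypothesis $Hol(N)=Hol(G)$ is precisely what promotes $\sigma$ from an arbitrary element of $B$ to an element of $Norm_B(Hol(G))$, and everything else is normalizer bookkeeping.
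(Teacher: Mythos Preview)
The paper does not actually supply a proof of this proposition; it is stated as background imported from \cite{Kohl2015} (the multiple holomorph paper), so there is no in-paper argument to compare against. Your argument is correct and is the standard one: well-definedness and injectivity of $\tau Hol(G)\mapsto\tau\lambda(G)\tau^{-1}$ reduce to $Norm_B(\lambda(G))=Hol(G)$ (which in this paper is simply the \emph{definition} of $Hol(\,\cdot\,)$ for regular subgroups, so you need not invoke Proposition~\ref{strongnorm} at all), and surjectivity follows because isomorphic regular subgroups of $B$ are conjugate in $B$, after which the hypothesis $Hol(N)=Hol(G)$ forces the conjugator into $Norm_B(Hol(G))$. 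Your parenthetical remark that $N\leq Hol(G)$ is redundant is also right.
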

The significance of this to the enumeration of $R(G,[G])$ in general is that $\mathcal{H}(G)\subseteq R(G,[G])$ since $\lG\leq Hol(G)=Hol(N)$ for any $N\in\mathcal{H}(G)$. And for $D_n$ we have the following 
\begin{theorem}\cite[Thm. 2.11]{Kohl2015}
For $G=D_n$ we have:
$$|\mathcal{H}(D_n)|=|T(D_n)|=|\Upsilon_n|$$
where $\Upsilon_n=\{u\in U_n\ |\ u^2=1\}$ the units of exponent 2 mod n.
\end{theorem}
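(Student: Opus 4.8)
This theorem is quoted from \cite{Kohl2015}; the plan is to establish it as follows. By the preceding proposition $|\mathcal{H}(D_n)|=|T(D_n)|$ and $T(D_n)=Norm_B(Hol(D_n))/Hol(D_n)$, so everything reduces to computing this index. Throughout set $X_0=\cx=\{1,x,\dots,x^{n-1}\}$ and $Y_0=Z-X_0$, recall from \ref{strongnorm} that $Norm_B(\clx)=Hol(D_n)$ and from \ref{uniqueHolinW} that $Hol(D_n)\leq W(X_0,Y_0)$ for this one splitting, and recall $Hol(D_n)\cong D_n\rtimes Aut(D_n)$ is generated by $\rho(x)$, $\rho(t)$ and the permutations $\phi_{i,j}$ of $Z$.

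\emph{Lower bound.} For $w\in\Upsilon_n$ I would introduce $\psi_w\in B=Perm(Z)$ that fixes $X_0$ pointwise and sends $tx^b\mapsto tx^{wb}$ on $Y_0$. Since $w^2\equiv 1\pmod n$ each $\psi_w$ is an involution, and $w\mapsto\psi_w$ is an injective homomorphism $\Upsilon_n\to B$. The crux of this half is the direct computation that $\psi_w$ normalizes $Hol(D_n)$: conjugating the generators gives $\psi_w\phi_{i,j}\psi_w^{-1}=\phi_{wi,j}$, shows $\psi_w\rho(x)\psi_w^{-1}$ is $\rho(x)$ post-composed with a suitable $\phi_{i,1}$, and shows $\psi_w\rho(t)\psi_w^{-1}=\rho(t)\phi_{0,w}$ — and here the congruence $w^2\equiv 1$ is exactly what makes the last expression lie in $Hol(D_n)$. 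Finally $\psi_w\in Hol(D_n)$ would force $\psi_w$ to fix $1$, hence to equal some $\phi_{i,j}$, and comparing the actions on $X_0$ and on $t$ forces $j=1$, $i=0$, $w=1$. Hence the $|\Upsilon_n|$ cosets $\psi_w\,Hol(D_n)$ are distinct and $|T(D_n)|\geq|\Upsilon_n|$.

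\emph{Upper bound.} Given $\sigma\in Norm_B(Hol(D_n))$, put $N=\sigma\lambda(D_n)\sigma^{-1}$, a regular subgroup isomorphic to $D_n$ with $Norm_B(N)=\sigma\,Hol(D_n)\,\sigma^{-1}=Hol(D_n)$; its cyclic index-$2$ subgroup $K=\sigma\clx\sigma^{-1}$ is a product of two disjoint $n$-cycles and is normal in $Hol(D_n)$, and since a product of two disjoint $n$-cycles is the index-$2$ characteristic subgroup of exactly one regular subgroup isomorphic to $D_n$, also $Norm_B(K)=Hol(D_n)$. The $K$-orbits form a block system of $Hol(D_n)$ with two blocks of size $n$, so by the uniqueness in \ref{uniqueHolinW} they are $X_0$ and $Y_0$; thus $K$ lies in $Hol(D_n)\cap S(X_0,Y_0)$, which restricts onto $Hol(\mathbb{Z}_n)$ on the block $X_0$ (kernel the order-$n$ subgroup $\{\phi_{i,1}:i\in\mathbb{Z}_n\}$), and $K|_{X_0}$ is a regular cyclic normal subgroup thereof. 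One then argues that $K|_{X_0}$ must be the translation subgroup, so that after rescaling $K=\langle k_w\rangle$ with $k_w(x^j)=x^{j+1}$ and $k_w(tx^b)=tx^{b+w}$ for some $w\in U_n$; conjugating $k_w$ by $\rho(t)\in Hol(D_n)$ and demanding the result lie in $\langle k_w\rangle$ yields $w^2\equiv 1\pmod n$. Since $K=\psi_{-w}\clx\psi_{-w}^{-1}$ with $-w\in\Upsilon_n$, we get $\psi_{-w}^{-1}\sigma\in Norm_B(\clx)=Hol(D_n)$ by \ref{strongnorm}, i.e. $\sigma\in\psi_{-w}\,Hol(D_n)$. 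Together with the lower bound this gives $Norm_B(Hol(D_n))=\bigcup_{w\in\Upsilon_n}\psi_w\,Hol(D_n)$ and $|T(D_n)|=|\Upsilon_n|$.

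\emph{Main obstacle.} The delicate point is the assertion that $K|_{X_0}$ is the translation subgroup. For $n$ odd this is immediate, since $Hol(\mathbb{Z}_n)$ then has a unique normal regular cyclic subgroup of order $n$; but for $n$ even $Hol(\mathbb{Z}_n)$ acquires further normal regular cyclic subgroups, and to exclude them one must use the full normality of $K$ in $Hol(D_n)$ — in particular the conjugation action of the block-swapping elements $\rho(t)$ and $\lt$, together with $Norm_B(K)=Hol(D_n)$ — rather than normality on a single block. Equivalently, one can recast the whole argument through $Out(Hol(D_n))$: $T(D_n)$ embeds as the subgroup of outer automorphisms of $Hol(D_n)$ that preserve the conjugacy class of a point stabilizer $Aut(D_n)$, and a computation of $Aut(Hol(D_n))$ identifies that subgroup with $\Upsilon_n$. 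Either route, the substance lies in bounding the symmetry of $Hol(D_n)$ beyond its inner automorphisms; the arithmetic case split of the paper's main theorem plays no role here, entering only in the later passage from $\mathcal{H}(D_n)$ to the full set $R(D_n,[D_n])$.
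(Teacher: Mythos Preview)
The paper does not prove this statement; it is quoted from \cite[Thm.~2.11]{Kohl2015} and used without argument as input to the later enumeration. So there is no in-paper proof to compare against, and what follows is an assessment of your sketch on its own terms.

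Your lower bound is correct and complete: the involutions $\psi_w$ normalize $Hol(D_n)$, the identity $\psi_w\rho(t)\psi_w^{-1}=\rho(t)\phi_{0,w}$ is exactly where $w^2\equiv 1$ is needed (for a general unit $w$ one gets $x^b\mapsto tx^{-wb}$ but $tx^b\mapsto x^{-w^{-1}b}$, which lies in $Hol(D_n)$ only when $w=w^{-1}$), and the coset-distinctness check is fine.

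For the upper bound you correctly flag a gap at the step ``$K|_{X_0}$ must be the translation subgroup,'' but your diagnosis of the difficulty is off, and the fix is much shorter than either route you propose. Passing to the quotient $Hol(\mathbb{Z}_n)$ discards precisely the information that resolves the issue. Work instead directly in $Hol(D_n)\cap S(X_0,Y_0)\cong(\mathbb{Z}_n\times\mathbb{Z}_n)\rtimes U_n$, where an element $(a',c,j)$ acts by $x^b\mapsto x^{jb+a'}$ and $tx^b\mapsto tx^{jb+c}$. The kernel of your restriction map is $\{\phi_{i,1}\}=\{(0,i,1)\}$, and conjugating a generator $(a',c,j)$ of $K$ by $(0,i,1)$ gives $(a',\,c+i(1-j),\,j)$. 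The powers of $(a',c,j)$ whose third coordinate equals $j$ all have first two coordinates of the form $(a'\mu,c\mu)$ for a common scalar $\mu$; matching first coordinates forces $\mu=1$ (since $a'$ is a unit by regularity on $X_0$), whence $i(1-j)=0$ for every $i$, so $j=1$. Thus $K$ already lies in the abelian ``double translation'' subgroup $\mathbb{Z}_n\times\mathbb{Z}_n$, with no appeal to block-swapping elements or to $Out(Hol(D_n))$. Your argument then finishes exactly as written: $K=\langle(1,w,1)\rangle$ for some unit $w$, normality under $\lt$ (which swaps the two $\mathbb{Z}_n$ factors) gives $w^2=1$, and $K=\psi_{-w}\clx\psi_{-w}^{-1}$ yields $\sigma\in\psi_{-w}\,Hol(D_n)$ via \ref{strongnorm}.
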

The cardinality of $\Upsilon_n$ will play a central role in the enumeration of $R(D_n,[D_n])$ for each $n$. However the arguments differ a bit for the cases where $n$ is odd versus when $n$ is even, and for those $N$ such that $Norm_B(N)\leq W(X_0,Y_0)$ versus those where $Norm_B(N)\leq W(X_i,Y_i)$ for $i=1,2$.  We note a technical fact which will be used in the subsequent theorem.
\begin{lemma}
Let $n$ be even and $v\in\Upsilon_n$:\par\vskip0.125in
(a) if $8\nmid n$ then $gcd(v+1,n)=2$ only if $v=1$\par
(b) if $8|n$ then $gcd(v+1,n)=2$ only if $v=1,\frac{n}{2}+1$
\end{lemma}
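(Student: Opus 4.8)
The plan is to turn the hypothesis into a single divisibility condition on $v-1$. Write $n=2^{a}m$ with $m$ odd and $a\ge 1$ (since $n$ is even), and recall that $v\in\Upsilon_n$ means exactly that $v$ is odd (being a unit mod $n$) and $n\mid(v-1)(v+1)$. So I would assume $\gcd(v+1,n)=2$ and extract information prime by prime.

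For the odd part: if $p$ is an odd prime dividing $n$, then $p\nmid v+1$, since otherwise $p\mid\gcd(v+1,n)=2$; hence the exact power of $p$ dividing $n$ already divides $v-1$, and multiplying over all odd $p\mid n$ gives $m\mid v-1$. For the $2$-part: if $a\ge 2$ then $4\mid n$, so $\gcd(v+1,n)=2$ forces $4\nmid v+1$, i.e. $v+1\equiv 2\pmod 4$; plugging this into $2^{a}\mid(v-1)(v+1)$ gives $2^{a-1}\mid v-1$ (this is vacuous when $a=1$). Combining the two halves, $\frac n2=2^{a-1}m$ divides $v-1$, so $v\equiv 1\pmod{n/2}$ and therefore, as a residue mod $n$, $v=1$ or $v=\frac n2+1$.

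It then remains to decide, for each case, which of these two residues genuinely lies in $\Upsilon_n$ and meets $\gcd(v+1,n)=2$. If $8\nmid n$ then $a\in\{1,2\}$: for $a=1$ the candidate $\frac n2+1=m+1$ is even, hence not a unit mod $n$; for $a=2$ one computes $(\frac n2+1)+1=2(m+1)$, which is divisible by $4$ because $m$ is odd, so $\gcd((\frac n2+1)+1,\,n)=4\ne 2$. In either subcase only $v=1$ survives, which is part (a). If $8\mid n$, so $a\ge 3$, then $4\mid n$ yields $(\frac n2+1)^2=\frac{n^2}{4}+n+1\equiv 1\pmod n$, and $(\frac n2+1)+1=2\bigl(2^{a-2}m+1\bigr)$ with $2^{a-2}m+1$ odd and coprime to $m$, so $\gcd((\frac n2+1)+1,\,n)=2$; thus both residues qualify and we obtain part (b).

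Everything here is elementary arithmetic; there is no real obstacle, only the $2$-adic bookkeeping that separates $a=1$, $a=2$, and $a\ge 3$, which is exactly what produces the two stated cases of the lemma.
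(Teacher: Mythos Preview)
Your argument is correct. The route, however, differs from the paper's. You factor $n=2^{a}m$ with $m$ odd and argue prime by prime: the odd part of $n$ is forced into $v-1$ because $\gcd(v+1,n)=2$ kills every odd prime in $v+1$, and the $2$-adic condition $v+1\equiv 2\pmod 4$ pushes $2^{a-1}$ into $v-1$, yielding $\tfrac{n}{2}\mid v-1$ immediately. The paper instead writes $n=2m$ (with $m$ not assumed odd), sets $v=2l+1$, and works directly with the relation $2l(l+1)=qm$ coming from $v^2\equiv 1$; the coprimality $\gcd(l+1,m)=1$ then forces $2l=dm$ with $d\in\{0,1\}$, and a further split on the parity of $m$ (and then of $m/2$) isolates the same two candidates.

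Your decomposition makes the $2$-adic threshold $a\in\{1,2\}$ versus $a\ge 3$ visible from the outset and explains structurally why the boundary is $8\mid n$; it also lets you verify in one line that $\tfrac{n}{2}+1$ genuinely satisfies both hypotheses when $8\mid n$, which the paper leaves implicit. The paper's computation is a bit more hands-on but requires no appeal to prime factorization. Either way the content is the same; your version is a cleaner packaging of the arithmetic.
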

\begin{proof}
Since $n=2m$ then $v=2l+1$ and since $v\in \Upsilon_n$ we have $2l^2+2l\equiv 0(mod\ m)$ so that $2l(l+1)=qm$ for some $q$. Since $gcd(v+1,n)=gcd(2l+2,2m)=2$ then $gcd(l+1,m)=1$ which means $(l+1)|q$, that is $q=d(l+1)$. Since $v<n$ then $l+1\leq m$ so therefore $2l(l+1)=d(l+1)m$ which implies $2l=dm$. As such either $d=0$ or $d=1$. If $d=0$ then $2l=0$ so that $l=0$ and $v=1$. If $d=1$ then $2l=m$ and if $m=2k+1$ we have a contradiction. If $d=1$ and $m=2k$ then $2l=2k$ so $l=k$ and $v=2k+1=m+1$ and so $v+1=m+2=2k+2$. But now $gcd(v+1,n)=gcd(2k+2,4k)=2$ which means $gcd(k+1,2k)=1$. However, if $8\nmid n$ then $4\nmid m$ and so $2\nmid k$ which means $k+1$ is even, which means $gcd(k+1,2k)$ cannot equal 1. As such, for $v\in U_n$ to be such that $gcd(v+1,n)=2$ we must have $2|k$ and thus $8|n$ and thus either $v=1$ or $v=\frac{n}{2}+1$.
\end{proof}
Our first segment of the enumeration of $R(D_n,[D_n])$ is the following.
\begin{theorem}
For $G=D_n$, if 
$$R(D_n,[D_n];W(X_i,Y_i))=\{N\in R(D_n,[D_n])\ |\ Norm_B(N)\leq W(X_i,Y_i)\}$$
then\par\vskip0.125in
(a) If $n$ is odd, $|R(D_n,[D_n])|=|R(D_n,[D_n];W(X_0,Y_0))|=|\Upsilon_n|$.\par\vskip0.125in
(b) If $n$ is even then $|R(D_n,[D_n];W(X_0,Y_0)|=\mu_n|\Upsilon_n|$  for 
$$\mu_n=|\{v\in\Upsilon_n\ |\ gcd(v+1,n)=2\}|$$
where $\mu_n=2$ if $8|n$, and otherwise $\mu_n=1$.\par\vskip0.125in
\end{theorem}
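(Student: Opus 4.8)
\emph{Proposed proof.} The plan is to reduce everything to counting the cyclic, index-$2$, characteristic subgroups $K$ that can occur, and then to parametrize those. First I would record two consequences of Section~3: by the last Proposition there, a regular $N\cong D_n$ is determined by $K=\langle k\rangle$ (with $k$ a product of two disjoint $n$-cycles), and since conjugating $N$ by $\lambda(g)$ produces the regular $D_n$ whose characteristic subgroup is $\lambda(g)K\lambda(g)^{-1}$, we have $\lambda(G)\leq Norm_B(N)$ iff $\lambda(G)$ normalizes $K$; moreover if $N\leq W(X,Y)$ corresponds to $K$ then the $K$-orbit of $1$ fills the block $X$, so (using \ref{charK}) $Norm_B(N)\leq W(X_0,Y_0)$ iff $K\cdot 1=X_0$. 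Combined with Theorem~\ref{Nwreath}(a), this already gives $R(D_n,[D_n])=R(D_n,[D_n];W(X_0,Y_0))$ when $n$ is odd, which is the first equality in (a).

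Next I would pass to coordinates $X_0\cong\mathbb Z_n$ ($x^i\leftrightarrow i$) and $Y_0\cong\mathbb Z_n$ ($tx^i\leftrightarrow i$), so that $\lambda(x)=(\sigma,\sigma^{-1})$ with $\sigma=(0,1,\dots,n-1)$ and $\lambda(t)=\tau$, the coordinate swap. A valid $K$ with $K\cdot 1=X_0$ has the form $K=\langle(p,q)\rangle$ with $p,q$ $n$-cycles. Conjugating by $\tau$ yields $(q,p)\in K$, forcing $q=p^{b}$ with $b^2\equiv 1\pmod n$, i.e.\ $b\in\Upsilon_n$; conjugating by $\lambda(x)$ yields $(\sigma p\sigma^{-1},\sigma^{-1}q\sigma)\in K$, forcing $\sigma p\sigma^{-1}=p^{a}$, $\sigma^{-1}q\sigma=q^{a}$ for a common $a$, and substituting $q=p^{b}$ then forces $a^2\equiv 1\pmod n$, i.e.\ $a\in\Upsilon_n$, so that $\sigma^2$ centralizes $P:=\langle p\rangle$ and hence $\sigma^2\in P$. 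Conversely any $n$-cycle $p$ with $\sigma p\sigma^{-1}=p^{a}$ for some $a\in\Upsilon_n$, together with any $b\in\Upsilon_n$, gives a valid $K=\langle(p,p^{b})\rangle$. As $K$ depends only on the pair $(P,b)$ and distinct pairs yield distinct $K$ (project onto $Perm(X_0)$ to recover $P$, then read $b$ off one element), one obtains
\[
|R(D_n,[D_n];W(X_0,Y_0))|=|\mathcal P_n|\cdot|\Upsilon_n|,
\]
where $\mathcal P_n$ is the set of regular cyclic subgroups $P\leq Perm(X_0)$ of order $n$ normalized by $\sigma$ with the induced power-automorphism of $P$ of order dividing $2$.

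The remaining task is to show $|\mathcal P_n|=1$ for $n$ odd (giving (a)) and $|\mathcal P_n|=\mu_n$ for $n$ even (giving (b)). Always $\langle\sigma\rangle\in\mathcal P_n$ with power $1$. For $n$ odd, $\sigma^2$ is an $n$-cycle generating $\langle\sigma\rangle$, so $\sigma^2\in P$ forces $P=\langle\sigma\rangle$: done. For $n$ even, if $P\in\mathcal P_n$ has power $a$ then $\sigma^2=p^{c}$ has order $n/2$, so $gcd(c,n)=2$, and $\sigma$ commuting with $\sigma^2$ gives $(a-1)c\equiv 0\pmod n$, hence $a\equiv 1\pmod{n/2}$, i.e.\ $a\in\{1,\tfrac n2+1\}$ (the second only when $4\mid n$). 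If $P\neq\langle\sigma\rangle$ then $a=\tfrac n2+1$, $\sigma\notin P$, and $\widehat G:=\langle\sigma,p\rangle$ has order $2n$; its transitive action on $X_0$ has point stabilizer generated by an involution $\sigma p^{\ell}$, and $(\sigma p^{\ell})^2=p^{c+(a+1)\ell}=1$ makes $(a+1)\ell\equiv -c\pmod n$ solvable, so $gcd(a+1,n)\mid gcd(c,n)=2$, whence (as $a$ is odd) $gcd(a+1,n)=2$. Conversely each $v\in\Upsilon_n$ with $gcd(v+1,n)=2$ is the power of some $P\in\mathcal P_n$: for $v=1$ via $\langle\sigma\rangle$, and for $v=\tfrac n2+1$ (which by the preceding Lemma forces $8\mid n$, and then $gcd(\tfrac n2+2,n)=2$) by reversing the construction -- form the order-$2n$ group $\langle p,b_0\mid p^{n}=1,\,b_0^2=p^2,\,b_0pb_0^{-1}=p^{n/2+1}\rangle$, let the point stabilizer be generated by an involution $b_0p^{\ell}$ with $\ell$ odd (available precisely because $gcd(\tfrac n2+2,n)=2$), note the image of $\langle p\rangle$ is then regular cyclic with the image of $b_0$ an $n$-cycle normalizing it by $p\mapsto p^{n/2+1}$, and conjugate in $Perm(X_0)$ so that $n$-cycle becomes $\sigma$. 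Uniqueness of a $P$ with power $\tfrac n2+1$ I would get from a count inside $C_{Perm(X_0)}(\sigma^2)\cong C_{n/2}\wr C_2$, showing the $n$-cycles $p$ with $\sigma p\sigma^{-1}=p^{n/2+1}$ form exactly one cyclic subgroup's worth ($\phi(n)$ of them) when $8\mid n$ and none otherwise. Thus $P\mapsto(\text{power of }P)$ is a bijection $\mathcal P_n\to\{v\in\Upsilon_n:gcd(v+1,n)=2\}$, so $|\mathcal P_n|=\mu_n$, which the Lemma evaluates to $2$ when $8\mid n$ and $1$ otherwise.

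The main obstacle will be the two uniqueness/realizability claims for $n$ even: showing that a cyclic $P\in\mathcal P_n$ with the nontrivial power $\tfrac n2+1$ is unique (the only genuinely computational point, handled by the $C_{n/2}\wr C_2$ count) and that such a $P$ actually exists when $8\mid n$ (requiring the passage from an abstract group to a faithful permutation action and the parity observation that the stabilizing involution can be chosen outside both relevant cyclic subgroups). The bijection $N\leftrightarrow K\leftrightarrow(P,b)$ is routine once the Section~3 uniqueness result is in hand, but it is what makes the whole count tractable.
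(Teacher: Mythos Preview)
Your proposal is correct and reaches the same count, but by a genuinely different route from the paper. Both arguments reduce to enumerating the characteristic cyclic subgroups $K=\langle k_Xk_Y\rangle$ normalized by $\lambda(G)$, and both extract from the conjugation relations a pair of exponents $u,v\in\Upsilon_n$ (your $b,a$). From there the paper writes $k_X=(i_0,\dots,i_{n-1})$, $k_Y=(j_0,\dots,j_{n-1})$, derives the recursion $i_{(r+e)v}-i_e=1$ together with $j_f=i_{uf}$, and solves it explicitly, showing the index set $\{rv+\cdots+rv^e\}$ is a bijection onto $\mathbb Z_n$ precisely when $\gcd(v+1,n)=2$; the free parameters $(u,r,v)$ then give $|\Upsilon_n|\cdot\phi(n)\cdot\mu_n$ generators and hence $|\Upsilon_n|\cdot\mu_n$ subgroups $K$. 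You instead factor $K\leftrightarrow(P,b)$ with $P=\langle p\rangle$ the projection to $Perm(X_0)$ and $b\in\Upsilon_n$ the twist, and reduce to counting $P\in\mathcal P_n$; the constraint $\sigma^2\in P$ and a short coset/stabilizer argument in $\widehat G=\langle\sigma,p\rangle$ force the power $a$ to satisfy $\gcd(a+1,n)=2$, and you then construct and (via the $C_{n/2}\wr C_2$ count) pin down the unique $P$ with $a=\tfrac n2+1$ when $8\mid n$. The paper's explicit recursion has the virtue of being entirely self-contained and mechanical; your structural factorization is cleaner conceptually and makes transparent why the answer splits as $|\mathcal P_n|\cdot|\Upsilon_n|$, at the cost of the one computation you only sketch, namely that in $C_{Perm(X_0)}(\sigma^2)\cong C_{n/2}\wr C_2$ there are exactly $\phi(n)$ $n$-cycles $p$ with $\sigma p\sigma^{-1}=p^{n/2+1}$ (one checks that in wreath coordinates $p=(i,j;1)$ this forces $i-j=1+\tfrac n4$ and $i+j\in U_{n/2}$, giving $2\phi(n/2)=\phi(n)$ solutions, all generating the same $P$). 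That fills the only gap.
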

\begin{proof}
For the splitting
\begin{align*}
X_0&=\{1,x,\dots,x^{n-1}\}\\
Y_0&=\{t,tx,\dots,tx^{n-1}\}
\end{align*}
we recall that $N\in R(D_n,[D_n])$ implies $N\in R(D_n,[D_n];W(X_0,Y_0))$ automatically if $n$ is odd.\par\vskip0.125in
In this case, if $K\leq N$ is the (unique) subgroup of index 2, we have $K=\langle k\rangle=\langle k_Xk_Y\rangle$ where $Supp(k_X)=X_0$ and $Supp(k_Y)=Y_0$. Since $Supp(k_X)=X_0$ and $Supp(k_Y)=Y_0$ then $k_X(x^i)=x^{k_X(i)}$ and $k_Y(tx^j)=tx^{k_Y(j)}$ so we may, for convenience, identify 
\begin{align*}
k_X&=(x^{i_0},x^{i_1},\dots,x^{i_{n-1}})=(i_0,i_1,\dots,i_{n-1})\\
k_Y&=(tx^{j_0},tx^{j_1},\dots,tx^{j_{n-1}})=(j_0,j_1,\dots,j_{n-1})\\
\end{align*}
where $k_X(i_a)=i_{a+1}$ and $k_Y(j_b)=j_{b+1}$. The question is, what are the possibilities for these two $n$-cycles?\par
We begin by using the fact that $N$ (whence $K$) is normalized by $\lambda(D_n)$ so in particular by $\lambda(t)$ and $\lambda(x)$.\par
We have
\begin{align*}
\lx k\lxi(x^i)&=\lx k(x^{i-1})\\
              &=\lx(x^{k_X(i-1)})\\
              &=x^{k_X(i-1)+1}\\
\end{align*}
and
\begin{align*}
\lx k\lxi(tx^j)&=\lx k(x^{j+1})\\
              &=\lx(tx^{k_Y(j+1)})\\
              &=tx^{k_Y(j+1)-1}\\
\end{align*}
where $\lx k\lxi=k^v=k_X^vk_Y^v$ for some $v\in U_n$ where $v^n=1$. We note that since $\lt k_Xk_Y\lti=k_X^uk_Y^u$ then $\lx\lt k_Xk_Y\lti\lxi=k_X^{uv}k_Y^{uv}$ which, since $|\lx\lt|=2$ implies that $(uv)^2=1$ which means that, in fact, $v^2=1$, namely $v\in\Upsilon_n$. Under the identification
\begin{align*}
k_X&=(i_0,i_1,\dots,i_{n-1})\\
k_Y&=(j_0,j_1,\dots,j_{n-1})\\
i_0&=0\ \ j_0=0\\
\end{align*}
we have $k_X^v=(i_0,i_v,\dots,i_{(n-1)v})$ and $k_Y^v=(j_0,j_v,\dots,j_{(n-1)v})$ and therefore:
\begin{align*}
k_X(i_a-1)&=i_{a+v}-1\\
k_Y(j_b+1)&=j_{b+v}+1\\
\end{align*}
If we assume $i_{rv}=1$ for some $r$ then we have 
$$k_X(i_{rv}-1)=k_X(0)=k_X(i_0)=i_{(r+1)v}-1=i_1$$
but then 
\begin{align*}
k_X(i_1)=k_X(i_{(r+1)v}-1)&=i_{(r+2)v}-1=i_2\\
k_X(i_2)=k_X(i_{(r+2)v}-1)&=i_{(r+3)v}-1=i_3\\
&\dots
\end{align*}                                                                    which implies that $i_{(r+e)v}-i_e=1$ for each $e\in\mathbb{Z}_n$. Similarly, for some $s$, we have $j_{sv}+1=j_0=0$ (i.e. $j_{sv}=-1$) and so a similar inductive argument shows that
$$
j_{(s+e)v}-j_e=-1=n-1
$$
for each $e\in\mathbb{Z}_n$. Normalization by $\lt$ yields
\begin{align*}
\lt k\lti(x^i)&=\lt k(tx^i)\\
              &=\lt(tx^{k_Y(i)})\\
              &=x^{k_Y(i)}\\
\end{align*}
and
\begin{align*}
\lt k\lti(tx^j)&=\lt k(x^j)\\
              &=\lt(x^{k_X(j)})\\
              &=tx^{k_X(j)}\\
\end{align*}
where $\lt k\lti=k^u=k_X^uk_Y^u$ for some $u\in U_n$ where $u^2=1$. What this implies is that $x^{i_{e+u}}=k_X^u(x^{i_e})=x^{k_Y(i_e)}$, and if we again focus on the exponents we get
$$i_{e+u}=k_X^u(i_{e})=k_Y(i_e)$$
so we can consider what happens with $e=0,1,\dots$ (recalling that $i_0=j_0=0$ and that $k_Y(j_f)=j_{f+1}$) and we get
\begin{align*}
i_{u}&=k_Y(i_0)=k_Y(j_0)=j_1\\
i_{2u}&=k_Y(i_u)=k_Y(j_1)=j_2\\
i_{3u}&=k_Y(i_{2u})=k_Y(j_2)=j_3\\
&\vdots
\end{align*}
namely $j_{f}=i_{uf}$ for each $f\in\mathbb{Z}_n$, and since $u^2=1$ we can write this as  $j_{uf}=i_{f}$ too. So to summarize so far, we have $n$-cycles $(i_0,\dots,i_{n-1})$ and $(j_0,\dots,j_{n-1})$ where the $i's$ and $j's$ satisfy the following relations
\begin{align*}
i_{0}&=0\\
j_{0}&=0\\
i_{rv}&=1\text{ for some $r$}\\
j_{sv}&=-1\text{ for some $s$}\\
i_{(r+e)v}-i_e&=1\text{ for each $e\in\mathbb{Z}_n$}\\
j_{(s+e)v}-j_e&=-1\text{ for each $e\in\mathbb{Z}_n$}\\
j_{e}&=i_{ue}\text{ for each $e\in\mathbb{Z}_n$}\\
\end{align*}
where $u^2=1$ and $v^2=1$. So the question is what are the solutions of this system of equations, as these determine the possibilities for $k=k_Xk_Y$.\par
The first simplification we can make is that the relation $i_{ue}=j_e$ implies that the values of $j_g$ are completely determined by $i_e$ for $e\in\mathbb{Z}_n$ since $u$ is a unit. As such, we only need to deal with the solution(s) of the equations involving the $i_e$.\par 
The second simplification is to show that, in fact, $r,s\in U_n$. If $r\not\in U_n$ then for some $m<n$ we have $mr\equiv\ 0\ (mod\ n)$. And from the relation $i_{(r+e)v}-i_e=1$ we have
\begin{align*}
i_{rv}-i_0&=1\text{ [$e=0$]}\\
i_{2rv}-i_{rv}&=1\text{ [$e=r$]}\\
i_{3rv}-i_{2rv}&=1\text{ [$e=2r$]}\\
&\vdots\\
i_{mrv}-i_{(m-1)rv}&=1\text{ [$e=(m-1)r$]}\\
\end{align*}
Looking at the left and right hand sides, we see that the indices $\{0,rv,\dots,(m-1)rv\}$ and $\{0,r,\dots,(m-1)r\}$ are equal since $v\in U_n$. As such, if we add these $m$ equations we get
$$0=(\sum_{e=0}^{m-1} i_{erv})-(\sum_{f=0}^{m-1} i_{fr})=m$$
in $\mathbb{Z}_n$ which is impossible since $m<n$. So we conclude that in fact $r\in U_n$ and similarly $s\in U_n$ as well. The next task is to narrow down the possibilities for $v\in \Upsilon_n$\par\vskip0.125in
If $n$ is odd then $v^n=1$ together with $v^2=1$ immediately implies that $v=1$. If $n$ is even then we can use the $v^2=1$ relation as follows. From $i_{(r+e)v}-i_e=1$, $i_0=0$, $i_{rv}=1$ we obtain 
\begin{align*}
i_{rv+rv^2}-i_{rv}&=1\text{ [i.e. $i_{rv+rv^2}=2$]}\\
i_{rv+rv^2+rv^3}-i_{rv+rv^2}&=1\text{ [i.e. $i_{rv+rv^2+rv^3}=3$]}\\
&\vdots\\
i_{rv+rv^2+\dots+rv^e}&=e
\end{align*}
and similarly we have $j_{sv+sv^2+\dots+sv^f}=n-f$. In $i_{rv+rv^2+\dots+rv^e}=e$ we look at the index $rv+rv^2+\dots+rv^e$ and realize that
$$
rv+rv^2+\dots+rv^e=\begin{cases} fr(v+1)\ \ \ \ \ \ \ \text{ if }e=2f \\ fr(v+1)+rv\ \text{ if }e=2f+1\end{cases}
$$
and for the system
\begin{align*}
i_{0}&=0\\
i_{rv}&=1\text{ for some $r$}\\
i_{(r+e)v}-i_e&=1\text{ for each $e\in\mathbb{Z}_n$}\\
\end{align*}
the solutions we seek are those for which all $i_g$ are distinct. As we just saw $i_{rv+rv^2+\dots+rv^e}=e$ for each $e\in\mathbb{Z}_n$ which can be simplified to
\begin{align*}
i_{fr(v+1)}&=2f\text{\ \ \ \ \ \ if }e=2f\\
i_{fr(v+1)+rv}&=2f+1\text{ if }e=2f+1\\
\end{align*}
for $f\in\{0,\dots,\frac{n}{2}-1\}$. So in order that each $i_g$ is distinct we consider whether 
\begin{align*}
f_1r(v+1)&=f_2r(v+1)\\
f_1r(v+1)+rv&=f_2r(v+1)+rv\\
\end{align*}
which is equivalent to $fr(v+1)=0$.\par\vskip0.125in
Since $r$ is a unit then this is equivalent to $f(v+1)=0$ (mod $n$). In $\mathbb{Z}_n$ one has $|v+1|=\frac{n}{gcd(v+1,n)}$ which means $|v+1|=n/2$ if and only if $gcd(v+1,n)=2$ and therefore that $fr(v+1)=0$ only when $f=0$.  (i.e. $f=n/2$) so only when $e=2f=0$ which is consistent with $i_0=0$ and for $e=2f+1=1$ is consistent with $i_{rv}=1$.\par
So for the solutions of
\begin{align*}
i_{0}&=0\\
i_{rv}&=1\text{ for some $r\in U_n$}\\
j_{sv}&=-1\text{ for some $s\in U_n$}\\
i_{(r+e)v}-i_e&=1\text{ for each $e\in\mathbb{Z}_n$}\\
j_{(s+e)v}-j_e&=-1\text{ for each $e\in\mathbb{Z}_n$}\\
j_{e}&=i_{ue}\text{ for each $e\in\mathbb{Z}_n$}\\
\end{align*}
for a given $u\in\Upsilon_n$, and pair $(r,s)\in U_n\times U_n$, we must have $s=-ur$ since $j_{e}=i_{ue}$. If $8\nmid n$ then $v=1$ only, and if $8|n$ $v=1,\frac{n}{2}+1$ and so we have overall
$$
|\Upsilon_n|\cdot \phi(n)\cdot\mu_n
$$
distinct $k_Xk_Y$, which yields $|\Upsilon_n|\cdot\mu_n$ distinct $K=\langle k_Xk_Y\rangle$, and so that many $N\in R(G,[G])$ where $Norm_B(N)\leq W(X_0,Y_0)$.
\end{proof}
This completes the analysis for the case where $Norm_B(N)\leq W(X_0,Y_0)$.\par
We have shown that if $8\nmid n$ and $N$ has block structure $\{X_0,Y_0\}$ then $N\in\mathcal{H}(G)$. Note, this corresponds to $v=1$ only, and for $8|n$ the $v=\frac{n}{2}+1$ possibility yields the other $|\Upsilon_n|$ different $N\in R(G,[G])$ which do {\it not} lie in $\mathcal{H}(G)$.\par
For $n$ even, the situation is a bit more complicated, but can be understood in terms of the other block structures $\{X_1,Y_1\}$ and $\{X_2,Y_2\}$. We will need the following in the subsequent theorem.
\begin{lemma}
\label{autblock}
The automorphism $\phi_{(1,1)}\in Aut(D_n)$, where $\phi(x^b)=x^b$ and $\phi(tx^b)=tx^{b+1}$ has the property that $\phi(X_1)=X_2$, $\phi(X_2)=X_1$ and that $\phi(Y_1)=Y_2$ and $\phi(Y_2)=Y_1$, and also $\phi(X_0)=Y_0$ and $\phi(Y_0)=X_0$.
\end{lemma}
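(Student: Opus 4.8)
The plan is to verify all six set-maps by a single bookkeeping device: partition $Z=D_n$ (with $n$ even) into its four parity classes, the even and odd rotations $E=\{x^{b}\mid b\text{ even}\}$, $O=\{x^{b}\mid b\text{ odd}\}$, and the even- and odd-indexed reflections $E'=\{tx^{b}\mid b\text{ even}\}$, $O'=\{tx^{b}\mid b\text{ odd}\}$. In these terms $X_1=E\cup E'$, $Y_1=O\cup O'$, $X_2=E\cup O'$, $Y_2=O\cup E'$, while $X_0=E\cup O$ and $Y_0=E'\cup O'$. First I would record how $\phi=\phi_{1,1}$ acts on each class. Since $\phi(x^{b})=x^{b}$, the map $\phi$ fixes $E$ and $O$ pointwise; since $\phi(tx^{b})=tx^{b+1}$ shifts the reflection exponent by one, it reverses the parity of that exponent, giving $\phi(E')=O'$ and $\phi(O')=E'$. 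These four class-level identities are the only computation needed.

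Granting them, every block image is obtained by taking the union of the images of its two constituent classes. Thus $\phi(X_1)=\phi(E)\cup\phi(E')=E\cup O'=X_2$, and symmetrically $\phi(X_2)=E\cup E'=X_1$; likewise $\phi(Y_1)=O\cup E'=Y_2$ and $\phi(Y_2)=O\cup O'=Y_1$. This proves the interchange of the two nontrivial block systems $\{X_1,Y_1\}$ and $\{X_2,Y_2\}$, which is the content the lemma is actually used for, since conjugation by $\phi$ will then carry $W(X_1,Y_1)$ to $W(X_2,Y_2)$ and set up the intended bijection between the two corresponding families of regular subgroups.

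The subtle point, and the one I expect to be the real obstacle, is the pair $\{X_0,Y_0\}$. Running the same computation gives $\phi(X_0)=\phi(E)\cup\phi(O)=E\cup O=X_0$ and $\phi(Y_0)=\phi(E')\cup\phi(O')=O'\cup E'=Y_0$, so $\phi$ fixes $X_0$ and $Y_0$ setwise rather than interchanging them. This is forced, not a matter of choice: by Proposition \ref{AutGamma}(b) the rotation subgroup $\langle x\rangle=X_0$ is characteristic, so no automorphism of $D_n$, in particular not $\phi_{1,1}$, can send $X_0$ to $Y_0$. The printed relations $\phi(X_0)=Y_0$, $\phi(Y_0)=X_0$ are moreover incompatible with $\phi(X_1)=X_2$ already proved, since the former would carry the rotations lying in $X_1$ to reflections whereas the latter keeps them as the rotations $E\subseteq X_2$. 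Accordingly the last clause should read $\phi(X_0)=X_0$ and $\phi(Y_0)=Y_0$, and once this typo is recognized the lemma is fully established by the parity computation above.
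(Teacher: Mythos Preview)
Your argument is correct, and in fact the paper offers no proof for this lemma at all, so there is nothing to compare against in terms of approach. Your parity-class decomposition $Z=E\cup O\cup E'\cup O'$ is the natural way to verify the block interchanges, and your derivations $\phi(X_1)=X_2$, $\phi(X_2)=X_1$, $\phi(Y_1)=Y_2$, $\phi(Y_2)=Y_1$ are clean and correct.

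You are also right about the error in the last clause. Since $\phi_{1,1}(x^b)=x^b$, the map fixes every rotation, so $\phi(X_0)=X_0$ and $\phi(Y_0)=Y_0$; the claim $\phi(X_0)=Y_0$ is simply false, and your argument that it contradicts Proposition~\ref{AutGamma}(b) (characteristicity of $\langle x\rangle$) is the right diagnosis. Fortunately this clause is never invoked: the only use of Lemma~\ref{autblock} in the paper is in the final paragraph of the proof of the $W(X_i,Y_i)$ theorem, where one conjugates by $\phi_{1,1}$ to carry $W(X_1,Y_1)$ to $W(X_2,Y_2)$ and thereby set up the bijection $\Phi$. That step needs only the $X_1\leftrightarrow X_2$, $Y_1\leftrightarrow Y_2$ interchanges (together with $\phi_{1,1}\in Aut(D_n)\leq Hol(D_n)$ so that conjugation preserves membership in $R(D_n,[D_n])$), all of which you have established. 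So the typo is harmless for the paper's results, and your corrected version of the lemma is what should stand.
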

And for the enumeration of those $N$ such that $Norm_B(N)\leq W(X_i,Y_i)$ for $i=1,2$ we also need the following modest, yet important, fact.
\begin{lemma}
\label{DinHolC}
If $n=2m$ is even and $C_n=\langle\sigma\rangle$ is cyclic of order $n$ then $Hol(C_n)$ contains exactly one regular subgroup $D=\langle r,f\ |\ r^{m}=f^2=1\ rf=fr^{-1}\rangle\cong D_{m}$ such that $r\sigma=\sigma r$, and moreover one has that $f\sigma f^{-1}=\sigma^{-1}$.
\end{lemma}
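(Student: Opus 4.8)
\section*{Proof proposal}

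The plan is to work inside $Hol(C_n)=Norm_{Perm(C_n)}(\lambda(C_n))=\lambda(C_n)\rtimes Aut(C_n)$, writing each element uniquely as $\lambda(c)\alpha$ with $c\in C_n$ and $\alpha\in Aut(C_n)$, and identifying $\sigma$ with $\lambda(\sigma)$. The first move is to force the rotation subgroup. Since $C_n$ is abelian, conjugation by $\lambda(c)$ is trivial on $\lambda(C_n)$, so $(\lambda(c)\alpha)\lambda(\sigma)(\lambda(c)\alpha)^{-1}=\lambda(\alpha(\sigma))$; hence $\lambda(c)\alpha$ centralizes $\sigma$ iff $\alpha(\sigma)=\sigma$ iff $\alpha=1$, and so $C_{Hol(C_n)}(\sigma)=\lambda(C_n)$. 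Thus an $r$ of order $m$ with $r\sigma=\sigma r$ lies in the cyclic group $\lambda(C_n)$ of order $2m$ and must generate its unique subgroup of order $m$, namely $\lambda(\langle\sigma^2\rangle)$. So $\langle r\rangle$ is completely determined before $f$ enters.

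Next I would determine the admissible $f=\lambda(c)\alpha$ from the three requirements: $D$ regular, $f^2=1$, and $frf^{-1}=r^{-1}$. The subgroup $\langle r\rangle=\lambda(\langle\sigma^2\rangle)$ acts semiregularly with exactly two orbits on $C_n$ --- the even and the odd powers of $\sigma$ --- so $D=\langle r\rangle\cup\langle r\rangle f$, having order $n=2m$, is transitive (hence regular) exactly when $f$ swaps these orbits, i.e. when $f(1)=c=\sigma^k$ with $k$ odd. Writing $\alpha(\sigma)=\sigma^a$, the relation $frf^{-1}=r^{-1}$ becomes $\sigma^{2a}=\sigma^{-2}$, i.e. $a\equiv -1\pmod m$, so $a\in\{-1,m-1\}$ mod $2m$; and $f^2=1$ unwinds to $\alpha^2=1$ together with $c\,\alpha(c)=1$, the latter reading $2m\mid k(a+1)$. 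If $a=m-1$ this says $2\mid k$, contradicting $k$ odd; hence $a=-1$, i.e. $\alpha=\iota$ is inversion $\sigma\mapsto\sigma^{-1}$ (for which $\alpha^2=1$ is automatic), and $f=\lambda(\sigma^k)\iota$ for some odd $k$.

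It then remains to collapse this apparent one-parameter family to a single group and to recognize its isomorphism type. Because $\langle r\rangle=\lambda(\langle\sigma^2\rangle)$, the coset $\langle r\rangle f$ consists of the elements $\lambda(\sigma^{2j})\lambda(\sigma^k)\iota=\lambda(\sigma^{2j+k})\iota$, and since $k$ is odd the exponent $2j+k$ runs through all odd residues mod $2m$; thus $D=\lambda(\langle\sigma^2\rangle)\cup\{\lambda(\sigma^{\ell})\iota:\ell\text{ odd}\}$ is independent of $k$, proving uniqueness. Existence is exhibited by $k=1$, where $f^2=1$ and $frf^{-1}=r^{-1}$ check directly; since $\langle r\rangle\cong C_m$ sits with index $2$ and is inverted by the involution $f$, we get $D\cong D_m$. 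The ``moreover'' clause is then immediate: $f\sigma f^{-1}=\lambda(\sigma^k)\,\iota\,\lambda(\sigma)\,\iota\,\lambda(\sigma^{-k})=\lambda(\sigma^{-1})=\sigma^{-1}$, using $\iota\lambda(\sigma)\iota=\lambda(\sigma^{-1})$.

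The only step with genuine content --- the rest being $\lambda(c)\alpha$ bookkeeping in $Hol(C_n)$ --- is the uniqueness: a priori one faces both the spurious automorphism $a=m-1$ (a bona fide unit when $m$ is even) and a full family of involutions $f$ indexed by odd $k$, and one must kill the former via the parity clash in $2m\mid k(a+1)$ and show the latter all generate the same $D$ via the coset computation. I expect that to be the main obstacle to phrase cleanly.
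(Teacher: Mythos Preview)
Your proof is correct and follows essentially the same route as the paper: work explicitly in $Hol(C_n)\cong C_n\rtimes U_n$, show that the centralizer of $\sigma$ is $\lambda(C_n)$ to force $\langle r\rangle=\langle\sigma^2\rangle$, and then pin down $f$ from the dihedral relation together with regularity (i.e.\ $f(1)$ landing in the odd coset). Your argument is in fact more careful than the paper's in two places---you explicitly dispose of the spurious possibility $a\equiv m-1$ using the $f^2=1$ condition, and you give the coset computation showing that all admissible odd $k$ yield the same $D$---both of which the paper's proof leaves to the reader.
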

\begin{proof}
The elements of $Hol(C_n)$ are of the form $(\sigma^i,u)$ where $i\in\mathbb{Z}_n$ and $u\in U_n$ where $(\sigma^i,u)(\sigma^j,v)=(\sigma^{i+uj},uv)$, and these act (as permutations) on the elements of $C_n$ as $(\sigma^i,u)(\sigma^k)=\sigma^{i+uk}$. As such, $(\sigma^i,u)$ acts without fixed points provided $i\not\in\langle 1-u\rangle$. Moreover, $(\sigma^i,u)(\sigma^j,v)(\sigma^{i},u)^{-1}=(\sigma^{i+uj-iv},1)$ so any element commuting with $\sigma$ must lie in $\langle(\sigma,1)\rangle$ so we may assume $r=(\sigma^2,1)$. As to the order 2 generator $f=(\sigma^i,u)$, in order that $frf^{-1}=r^{-1}$ we must have that $u=-1$, where $i\not\in \langle 2\rangle$. As such, $\langle r,f\rangle$ is readily seen to be the only regular subgroup of $Hol(C_n)$ isomorphic to $D_m$, and one sees that $f(\sigma,1)f^{-1}=(\sigma^{-1},1)$.
\end{proof}

\begin{theorem}
For $G=D_n$ where $n$ is even then
$$|R(D_n,[D_n];W(X_i,Y_i))|=\dsize\frac{\frac{n}{2}\cdot|\Upsilon_n|\cdot\phi(\frac{n}{2})}{\phi(n)}$$
for $i=1,2$.
\end{theorem}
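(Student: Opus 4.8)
The plan mirrors the previous two theorems: analyze the characteristic cyclic subgroup $K=\langle k\rangle$ of $N$, but now read against the two blocks $X_1,Y_1$, and reduce to counting regular cyclic groups on a single block. First dispose of $i=2$: the automorphism $\phi_{(1,1)}$ of Lemma \ref{autblock} lies in $Aut(D_n)\le Hol(D_n)$, hence normalizes $\lambda(D_n)$, and as a permutation of $Z$ it carries $\{X_1,Y_1\}$ to $\{X_2,Y_2\}$ and thus conjugates $W(X_1,Y_1)$ onto $W(X_2,Y_2)$; so $N\mapsto\phi_{(1,1)}N\phi_{(1,1)}^{-1}$ is a bijection between the two sets and it suffices to count for $i=1$.

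The key localization is that $X_1$ is exactly the index-$2$ subgroup $K_1=\langle x^2,t\rangle\cong D_m$ of $D_n$ (with $m=n/2$), so the $\lambda(D_n)$-stabilizer of the block $X_1$ is $\lambda(K_1)=\lambda(D_n)\cap S(X_1,Y_1)$, acting on $X_1$ as the left regular representation of $D_m$. For $N\in R(D_n,[D_n];W(X_1,Y_1))$ with $K=\langle k\rangle$ write $k=k_Xk_Y$, $k_X=k|_{X_1}$, $k_Y=k|_{Y_1}$; since $K=N\cap S(X_1,Y_1)$ is regular on each block these are $n$-cycles and $\langle k_X\rangle$ is a regular cyclic group of order $n$ on $X_1$. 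As $\lambda(D_n)$ normalizes $K$ and $\lambda(K_1)\le S(X_1,Y_1)$, the restriction $\lambda(K_1)|_{X_1}$ is a regular $D_m$-subgroup of $Hol(\langle k_X\rangle)\cong Hol(C_n)$, so by Lemma \ref{DinHolC} it is the only regular $D_m$-subgroup, whence $\lambda(K_1)|_{X_1}$ must be it: its rotation part is $\langle k_X^2\rangle$ (so $\lambda(x^2)|_{X_1}\in\langle k_X\rangle$ and $\langle k_X^2\rangle=E_X:=\langle\lambda(x^2)|_{X_1}\rangle$, an explicit product of two disjoint $m$-cycles) and each of its reflections, in particular $\lambda(t)|_{X_1}$, inverts $k_X$. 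Conversely, computing inside $C_{Perm(X_1)}(E_X)\cong C_m\wr S_2$, the regular cyclic $C_n$'s on $X_1$ whose square generates $E_X$ number exactly $\frac{m\,\phi(m)}{\phi(n)}$, and each such $\langle k_X\rangle$ is normalized by all of $\lambda(K_1)|_{X_1}$ (the rotation part sits inside $\langle k_X\rangle$; for the reflection one checks $\lambda(t)|_{X_1}k_X\lambda(t)|_{X_1}^{-1}=k_X^{-1}$ using $tx^2t^{-1}=x^{-2}$).

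To reassemble: for an admissible $\langle k_X\rangle$, normalization of $K$ by $\lambda(x)$ (which swaps $X_1,Y_1$) forces $\langle k_Y\rangle=\lambda(x)\langle k_X\rangle\lambda(x)^{-1}$, and (arguing as in the $W(X_0,Y_0)$ proof, via $|\lambda(xt)|=2$) $\lambda(x)k\lambda(x)^{-1}=k^v$ and $\lambda(t)k\lambda(t)^{-1}=k^u$ with $u^2=v^2=1$. Then $k_Y$ is determined by $k_X$ and $v\in\Upsilon_n$ (namely $k_Y=(\lambda(x)k_X\lambda(x)^{-1})^{v}$, using $v^2=1$), every $v\in\Upsilon_n$ yields a $K=\langle k_Xk_Y\rangle$ normalized by all of $\lambda(D_n)$ (using $v^2=1$ for $\lambda(x)$, and $u=-1$ on both blocks for $\lambda(t)$), distinct pairs $(\langle k_X\rangle,v)$ give distinct $K$, and each such $K$ is the characteristic subgroup of a unique $N$ in the target set (by the Proposition on products of two disjoint $n$-cycles together with Proposition \ref{charK}). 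Counting the pairs gives $|R(D_n,[D_n];W(X_1,Y_1))|=\frac{m\,\phi(m)}{\phi(n)}\cdot|\Upsilon_n|=\frac{\frac{n}{2}\cdot|\Upsilon_n|\cdot\phi(\frac{n}{2})}{\phi(n)}$, and $i=2$ follows by the bijection.

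I expect the main obstacle to be the enumeration and reassembly of the last two paragraphs: (a) computing $C_{Perm(X_1)}(E_X)$ and counting the cyclic $C_n$'s it contains whose square generates $E_X$; (b) verifying that the block-preserving generator $\lambda(t)$ normalizes every admissible $\langle k_X\rangle$ (equivalently, pinning down $u=-1$ on both blocks) so that no candidate is lost; and (c) checking that the gluing parameter $v$ ranges over all of $\Upsilon_n$ with no further constraints and without producing repeated $N$'s. Parts (b)–(c) hinge on carefully tracking the interplay of $\lambda(t)$ (block-preserving), $\lambda(x)$ (block-swapping), and the identity $\lambda(x^2)=\lambda(x)^2$; the computation parallels, but is more delicate than, the $W(X_0,Y_0)$ case, with Lemma \ref{DinHolC} the precise ingredient that identifies the copy of $D_m$ acting on each block.
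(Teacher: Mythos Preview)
Your approach is essentially the paper's: both arguments pivot on Lemma \ref{DinHolC} to force $u=-1$ and $\langle k_X^2\rangle=\langle\lambda(x^2)|_{X_1}\rangle$, and both dispose of $i=2$ by conjugating with $\phi_{(1,1)}$. The paper carries out the count via an explicit coordinate parametrization of generators $k_Xk_Y$ by triples $(s,v,w)\in(\mathbb{Z}_n\setminus\langle 2\rangle)\times\Upsilon_n\times U_{n/2}$ (with the dependency $r=sv+2w^{-1}$), whereas you repackage this as pairs $(\langle k_X\rangle,v)$; your count $\tfrac{m\phi(m)}{\phi(n)}$ of admissible $\langle k_X\rangle$ matches the paper's $(r,w)$-count after dividing by $\phi(n)$, so the two organizations give the same arithmetic.

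One ordering caveat: Lemma \ref{DinHolC} does not assert that $Hol(C_n)$ has a \emph{unique} regular $D_m$; it singles out the unique one whose order-$m$ generator commutes with the $C_n$-generator. So before invoking it to identify $\lambda(K_1)|_{X_1}$ with that distinguished copy, you must already know that $\lambda(x^2)|_{X_1}$ centralizes $k_X$. The paper secures this first, from $v^2=1$ (since $\lambda(x)^2k_X\lambda(x)^{-2}=k_X^{v^2}$), and only then applies the lemma; in your write-up $v^2=1$ appears only in the later ``reassembly'' paragraph. Move that step forward and your sketch lines up exactly with the paper's proof.
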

\begin{proof}
If $n$ is even then a given $N\in R(G,[G])$ is such that $Norm_B(N)\leq W(X_i,Y_i)$ for exactly one $i\in\{0,1,2\}$. The case where $Norm_B(N)\leq W(X_0,Y_0)$ has just been covered. Let's consider $Norm_B(N)\leq W(X_1,Y_1)$ where 
\begin{align*}
X_1&=\{1,x^2,\dots,x^{n-2},t,tx^2,\dots,tx^{n-2}\}\\
Y_1&=\{x,x^3,\dots,x^{n-1},tx,tx^3,\dots,tx^{n-1}\}
\end{align*}
which means $N$'s characteristic two subgroup $K$ is of the form $\langle k_xk_Y\rangle$ where $Supp(k_x)=X_1$ and $Supp(k_Y)=Y_1$. As such we have
\begin{align*}
k_X&=(t^{a_0}x^{b_0},t^{a_1}x^{b_1},\dots,t^{a_{n-1}}x^{b_{n-1}})\\
k_Y&=(t^{c_0}x^{d_0},t^{c_1}x^{d_1},\dots,t^{c_{n-1}}x^{d_{n-1}})\\
\end{align*}
where $a_e,c_e\in\{0,1\}$ and $b_e\in\{0,2,\dots,n-2\}$ and $d_e\in\{1,3,\dots,n-1\}$ where each even number $b_e$ appears twice, and each odd number $d_e$ appears twice, and similarly, half of the $a_e$ are $0$ and half are $1$ and similarly for $c_e$. \par
We can assume that $(a_0,b_0)=(0,0)$ and $(c_0,d_0)=(0,1)$ and that $(a_r,b_r)=(1,0)$  and $(c_s,d_s)=(1,1)$ for some $r,s$ since 
$1,t\in Supp(k_x)$ and $x,tx\in Supp(k_Y)$. The idea then will be to again determine equations amongst the $a_f,b_f,c_f,d_f$ whose solutions govern the potential generators of any such $K\leq N$ characteristic (of index 2) for $N \in R(G,[G])$. We have that $\lambda(x)$ and $\lambda(t)$ must normalize $K$ since $K$ is characteristic in $N$. \par\vskip0.125in
Since 
\begin{align*}
\lambda(t)&=(1,t)(x,tx)\dots(x^{n-1},tx^{n-1})\\
\lambda(x)&=(1,x,\dots,x^{n-1})(t,tx^{n-1},\dots,tx)
\end{align*}
we have that $\lambda(t)(X_1)=X_1$ and $\lambda(t)(Y_1)=Y_1$ while $\lambda(x)(X_1)=Y_1$ and $\lambda(x)(Y_1)=X_1$ and so
\begin{align*}
\lambda(t)k_X\lambda(t)&=k_X^u\\
\lambda(t)k_Y\lambda(t)&=k_Y^u\text{ for some $u\in\Upsilon_n$}\\
&\\
\lambda(x)k_X\lambda(x)^{-1}&=k_Y^v\\
\lambda(x)k_Y\lambda(x)^{-1}&=k_X^v\text{ for some $v\in U_n$ where $v^n=1$}\\
\end{align*}
where, since $|\lt\lx|=2$, we must have that $v^2=1$, i.e. $v\in\Upsilon_n$.\par
\noindent We can also show that $u=-1$. To see this we consider $k_X$ which lies in $B_{X_1}=Perm(X_1)$ and note that 
$$\lt|_{X_1}=(1,t)(x^2,tx^2)\cdots(x^{n-2},tx^{n-2})\in B_{X_1}$$
where now $\lt|_{X_1}$ lies in $Hol(\langle k_X\rangle)=Norm_{B_{X_1}}(\langle k_X\rangle)\cong Hol(C_n)$. Similarly, since $\lx k_X\lx^{-1}=k_Y^v$ and  $\lx k_Y\lx^{-1}=k_X^v$ then $\lx^2 k_X\lx^{-2}=k_X$ and since 
$$\lx^2=(1,x^2,\dots,x^{n-2})(t,tx^{n-2},\dots,tx^{2})(x,x^3,\dots,x^{n-1})(tx^{n-1},tx^{n-3},\dots,tx)$$
then
\begin{align*}
\lx^2|_{X_1}&=(1,x^2,\dots,x^{n-2})(t,tx^{n-2},\dots,tx^{2})\\
\lx^2|_{Y_1}&=(x,x^3,\dots,x^{n-1})(tx^{n-1},tx^{n-3},\dots,tx)\\
\end{align*}
and $\lx^2 k_X\lx^{-2}=k_X$ and therefore $\lx^2|_{X_1}\in Hol(\langle k_X\rangle)$ as well. By \ref{DinHolC} $D=\langle \lx^2|_{X_1},\lt|_{X_1}\rangle$ is the (unique) regular copy of $D_{\frac{n}{2}}$ in $Hol(\langle k_X\rangle)$ whose order $n/2$ generator centralizes $k_X$ which means that $\lt|_{X_1}$ conjugates $k_X$ to $k_X^{-1}$ and similarly $\lx|_{Y_1}$ conjugates $k_Y$ to $k_Y^{-1}$, i.e. $u=-1$.\par
\noindent The other consequence of \ref{DinHolC} is that $\langle\lx^2|_{X_1}\rangle=\langle k_X^2\rangle$ and similarly that $\langle\lx^2|_{Y_1}\rangle=\langle k_Y^2\rangle$ so that for units $w_1,w_2\in U_{\frac{n}{2}}$ we have $k_X=(\lx^2|_{X_1})^{w_1}$ and $k_Y=(\lx^2|_{Y_1})^{w_2}$. Furthermore, since $1\in Supp(k_X)$ and $x\in Supp(k_Y)$ then 
\begin{align*}
(t^{a_0}x^{b_0},t^{a_2}x^{b_2}\dots,t^{a_{n-2}}x^{b_{n-2}})&=(1,x^{2w_1},\dots,x^{(n-2)w_1})\ \  (*)\\
(t^{a_1}x^{b_1},t^{a_3}x^{b_3}\dots,t^{a_{n-1}}x^{b_{n-1}})&=(t,tx^{(n-2)w_1},\dots,tx^{2w_1})\\
(t^{c_0}x^{d_0},t^{c_2}x^{d_2}\dots,t^{c_{n-2}}x^{d_{n-2}})&=(x,x^{2w_2+1},\dots,x^{(n-2)w_2+1})\ \ (*)\\
(t^{c_1}x^{d_1},t^{c_3}x^{d_3}\dots,t^{c_{n-1}}x^{d_{n-1}})&=(tx,tx^{(n-2)w_2+1},\dots,tx^{2w_2+1})\\
\end{align*}
and so for each $e\in\mathbb{Z}_{\frac{n}{2}}$ 
\begin{align*}
a_{2e}&=0 & a_{2e+1}&=1\\
c_{2e}&=0 & c_{2e+1}&=1\\
\end{align*}
and also, since $t^{a_0}x^{b_0}=1$ and $t^{c_0}x^{d_0}=x$, that the cycles labelled (*) are in the same sequence which implies that 
\begin{align*}
b_{2e}&=2ew_1\\
d_{2e}&=2ew_2+1\\
\end{align*}
for each $e\in\mathbb{Z}_{\frac{n}{2}}$. As to the other $n/2$-cycles above, corresponding to $b_{2e+1}$ and $d_{2e+1}$, we defined $r$ and $s$ to be those indices such that $t^{a_r}x^{b_r}=t$ and $t^{c_s}x^{d_s}=tx$ which means that $r,s$ are odd and that
\begin{align*}
(t^{a_r}x^{b_r},t^{a_{r+2}}x^{b_{r+2}}\dots,t^{a_{r+n-2}}x^{b_{r+n-2}})&=(t,tx^{(n-2)w_1},\dots,tx^{2w_1})\\
(t^{c_s}x^{d_s},t^{c_{s+2}}x^{d_{s+2}}\dots,t^{c_{s+n-2}}x^{d_{s+n-2}})&=(tx,tx^{(n-2)w_2+1},\dots,tx^{2w_2+1})\\
\end{align*}
which means
\begin{align*}
b_{r+2e}&=-2ew_1\\
d_{s+2e}&=-2ew_2+1\\
\end{align*}
for $e\in\mathbb{Z}_{\frac{n}{2}}$. So the question is, what is the relationship between $r$ and $s$ and $w_1$ and $w_2$?
Conjugation by $\lambda(x)$ yields
\begin{align*}
(t^{a_0}x^{b_0+(-1)^{a_0}},t^{a_1}x^{b_1+(-1)^{a_1}},\dots,t^{a_{n-1}}x^{b_{n-1}+(-1)^{a_{n-1}}})&=(t^{c_0}x^{d_0},t^{c_v}x^{d_v},\dots,t^{c_{(n-1)v}}x^{d_{(n-1)v}})\\
(t^{c_0}x^{d_0+(-1)^{c_0}},t^{c_1}x^{d_1+(-1)^{c_1}},\dots,t^{c_{n-1}}x^{d_{n-1}+(-1)^{c_{n-1}}})&=(t^{a_0}x^{b_0},t^{a_v}x^{b_v},\dots,t^{a_{(n-1)v}}x^{b_{(n-1)v}})\\
\end{align*}
Here, $(a_0,b_0)=(0,0)$ yields $(a_0,b_0+(-1)^{a_0})=(0,1)=(c_0,d_0)$ so the first equation directly yields that
$$
d_{fv}=b_f+(-1)^{a_f}
$$
for each $f\in\mathbb{Z}_n$. So for $f=2e$ we have $d_{2ev}=2evw_2+1$ and $b_{2e}+(-1)^{a_{2e}}=2ew_1+1$ which means that $2evw_1\equiv 2ew_2(mod\ n)$ for each $e$, so in particular $2vw_2\equiv 2w_1\ (mod\ n)$, which yields 
$$vw_2\equiv w_1\ (mod\ \frac{n}{2})\leftrightarrow w_2\equiv vw_1\ (mod\ \frac{n}{2}) $$
and since $(c_s,d_s)=(1,1)$ then $(c_s,d_s+(-1)^{c_s})=(1,0)=(a_r,b_r)$ which means that
\begin{align*}
&(t^{c_0}x^{d_0+(-1)^{c_0}},t^{c_1}x^{d_1+(-1)^{c_1}},\dots,t^{c_{n-1}}x^{d_{n-1}+(-1)^{c_{n-1}}})=\\
&(t^{a_0}x^{b_0},t^{a_1}x^{b_1},\dots,t^{a_{(n-1)}}x^{b_{(n-1)}})^v=\\
&(t^{a_r}x^{b_r},t^{a_{r+1}}x^{b_{r+1}},\dots,t^{a_{r+(n-1)}}x^{b_{r+(n-1)}})^v\\
&\downarrow\\
&(t^{c_s}x^{d_s+(-1)^{c_s}},t^{c_{s+1}}x^{d_{s+1}+(-1)^{c_{s+1}}},\dots,t^{c_{s+n-1}}x^{d_{s+n-1}+(-1)^{c_{s+n-1}}})=\\
&(t^{a_r}x^{b_r},t^{a_{r+v}}x^{b_{r+v}},\dots,t^{a_{r+(n-1)v}}x^{b_{r+(n-1)v}})\\
\end{align*}
which yields
$$
d_{s+f}+(-1)^{c_{s+f}}=b_{r+fv}
$$
which also implies that $vw_1\equiv w_2\ (mod\ \frac{n}{2})$.\par
So a given
\begin{align*} 
K&=\langle k_Xk_Y \rangle\\
 &=\langle(t^{a_0}x^{b_0},t^{a_1}x^{b_1},\dots,t^{a_{n-1}}x^{b_{n-1}})(t^{c_0}x^{d_0},t^{c_1}x^{d_1},\dots,t^{c_{n-1}}x^{d_{n-1}})\rangle\\
\end{align*}
being normalized by $\lambda(G)$ implies that the following system of equations must be satisfied for each $e\in\mathbb{Z}_n$
\begin{align*}
a_{2e}&=0& a_{2e+1}&=1\\
c_{2e}&=0& c_{2e+1}&=1\\
b_{2e}&=2ew& d_{2e}&=2evw+1\\
b_{r+2e}&=-2ew& d_{s+2e}&=-2evw+1\\
d_{fv}&=b_f+(-1)^{a_f} & b_{r+fv}&=d_{s+f}+(-1)^{c_{s+f}}\\
\end{align*}
for each $e\in\mathbb{Z}_{n/2}$ and $f\in\mathbb{Z}_n$ where $v\in\Upsilon_n$ and $w\in U_{\frac{n}{2}}$. As to the relationship between $r$ and $s$ we observe that $d_{rv}=b_r+(-1)^{a_r}=0-1=-1$ and as, $rv$ is odd, then $s+2e=rv$ for some $e$, which means $2e=rv-s$, which, in turn, implies that $d_{s+2e}=-(rv-s)vw+1$. As such
$$
r-sv=2w^{-1}
$$
which therefore decreases the degrees of freedom, in that $r=sv+2w^{-1}$ where $s\in\mathbb{Z}_n-\langle 2\rangle$.
And so we have $\delta_n$ possible $k_Xk_Y$ where
\begin{align*}
\delta_n&=|\{(s,v,w)\in (\mathbb{Z}_n-\langle 2\rangle)\times \Upsilon_n\times U_{\frac{n}{2}}\}|\\
       &=\frac{n}{2}\cdot|\Upsilon_n|\cdot \phi(\frac{n}{2})\\
       &=\begin{cases} \frac{n}{2}\cdot|\Upsilon_n|\cdot \phi(n) \ \ 4\nmid n\\
                       \frac{n}{2}\cdot|\Upsilon_n|\cdot \frac{\phi(n)}{2} \ \ 4|n
         \end{cases}
\end{align*}
and so $\frac{\delta_n}{\phi(n)}$ possible $K$ which therefore enumerates the $N\in R(D_n,[D_n])$ where $Norm_B(N)\leq W(X_1,Y_1)$.\par
For those $N\in R(D_n,[D_n])$ where $Norm_B(N)\leq W(X_2,Y_2)$ we can utilize \ref{autblock} to establish a bijection
$$
R(D_n,[D_n];W(X_1,Y_1))\overset{\Phi}{\longrightarrow} R(D_n,[D_n];W(X_2,Y_2))$$
as follows. For $\phi_{1,1}$ we have $\phi_{(1,1)}W(X_i,Y_i){\phi_{(1,1)}^{-1}}=W(\phi_{(1,1)}(X_i),\phi_{(1,1)}(Y_i))$ and for a given $N\in R(D_n,[D_n])$ one has that $Norm_B(N)$ is contained in $W(X_i,Y_i)$ for exactly one $\{X_i,Y_i\}$ so we have 
$$|R(D_n,[D_n];\{X_1,Y_1\})|=|R(D_n,[D_n];\{X_2,Y_2\})|$$
which establishes the count in the statement of the theorem.
\end{proof}
\noindent Gathering the results for all $n$, we see that enumeration is dependent on whether $2|n$, $4|n$ or $8|n$ as summarized below:
$$
|R(D_n,[D_n])| = \begin{cases}
                       (\frac{n}{2}+2)|\Upsilon_n| \ \ \text{ if }8|n\\
                       (\frac{n}{2}+1)|\Upsilon_n| \ \ \text{ if }4|n \text{ but }8\nmid n\\
                       (n+1)|\Upsilon_n| \ \ \text{ if }2|n\text{ but }4\nmid n\\
                       |\Upsilon_n| \ \ \ \ \ \ \ \ \ \ \ \text{ if }n\text{ odd }\\
         \end{cases}
$$

The author wishes to thank Griff Elder and the University of Nebraska at Omaha for the hosting of the Hopf Algebras and Galois Module Theory workshop, which has provided an environment which spurred the development of this (and other) works over the last four years.

\bibliography{Dihedral}

\begin{thebibliography}{1}

\bibitem{Byott2004pq}
Nigel~P. Byott.
\newblock Hopf-{G}alois structures on {G}alois field extensions of degree $pq$.
\newblock {\em J. Pure Appl. Algebra}, 188(1-3):45--57, 2004.

\bibitem{CarnahanChilds1999}
Scott Carnahan and Linday Childs.
\newblock Counting {H}opf {G}alois structures on non-abelian {G}alois field
  extensions.
\newblock {\em J. Algebra}, 218:81--92, 1999.

\bibitem{CS69}
S.~U. Chase and M.~E. Sweedler.
\newblock {\em Hopf algebras and Galois theory}, volume~97 of {\em Lecture
  Notes in Mathematics}.
\newblock Springer-Verlag, 1969.

\bibitem{DixonMortimer1996}
J.~Dixon and B.~Mortimer.
\newblock {\em {Permutations Groups}}.
\newblock Number 163 in GTM. Springer, New York, 1996.

\bibitem{GreitherPareigis1987}
Cornelius Greither and Bodo Pareigis.
\newblock Hopf {G}alois theory for separable field extensions.
\newblock {\em J. Algebra}, 106:239--258, 1987.

\bibitem{Kohl1998}
T.~Kohl.
\newblock Classification of the {H}opf {G}alois structures on prime power
  radical extensions.
\newblock {\em J. Algebra}, 207:525--546, 1998.

\bibitem{Kohl2015}
T.~Kohl.
\newblock Multiple holomorphs of dihedral and quaternionic groups.
\newblock {\em Comm. Alg.}, 43:4290--4304, 2015.

\bibitem{Sweedler1968}
M.~Sweedler.
\newblock {\em Hopf Algebras}.
\newblock Benjamin, New York, 1968.

\end{thebibliography}
\bibliographystyle{plain}
\end{document}